\newtheorem{thm}{Theorem}[section]
\newtheorem{cor}[thm]{Corollary}
\newtheorem{lema}[thm]{Lemma}
\newtheorem{prop}[thm]{Proposition}
\newcommand{\N}{{\mathbb N}}
\newcommand{\dom}{\text{\sf dom}}
\newcommand{\im}{\text{\sf im}}
\newcommand{\ev}{\text{\sf ev}}
\newcommand{\cantor}{2^{\N}}
\title{Topologies on the symmetric inverse semigroup}
\author{J.  P\'erez}
\address{Escuela de Matem\'aticas, Universidad Industrial de Santander, C.P. 680001, Bucaramanga - Colombia.}
\email{jersonenrique\_64@hotmail.com}
\author{C. Uzc\'{a}tegui}
\address{Escuela de Matem\'aticas, Universidad Industrial de Santander, C.P. 680001, Bucaramanga - Colombia.}
\email{cuzcatea@saber.uis.edu.co}
\date{\today}
\begin{document}

\begin{abstract}
The symmetric inverse semigroup $I(X)$ on a set $X$ is the collection of all partial bijections between subsets of $X$ with composition as the algebraic operation. We study a minimal Hausdorff inverse semigroup topologies on $I(X)$. When $X$ is countable, we show some Polish semigroup topologies on $I(X)$. 

\end{abstract}

\subjclass{Primary 24A15, 03E15; Secondary 54H15}
\keywords{Inverse semigroup, Topological semigroup, Polish semigroup.}

\maketitle

\section{Introduction}

The symmetric inverse semigroup $I(X)$ on a set $X$ is the collection of all partial bijections between subsets of $X$ with composition as the algebraic operation. Among inverse semigroups, $I(X)$  plays a role analogous to that played by the symmetric group $S_\infty(X)$ for groups: every inverse semigroup  $S$ is isomorphic to a subsemigroup of $I(S)$. In this paper we study some semigroup topologies on $I(X)$. In the case of a countable $X$, the topologies on $I(X)$ can be found  Polish (i.e. completely metrizable and separable). 

The symmetric group $S_\infty(X)$ is a subsemigroup  of $X^X$ and also of $I(X)$. The usual topology of $S_\infty(X)$ is the one it inherits from the product topology on $X^X$. We define a topology $\tau_{pp}$ on $I(X)$, we call it the {\em partial product topology}, which also induces in $S_\infty(X)$ its usual product topology. It turns out that $\tau_{pp}$ is the minimal inverse semigroup Hausdorff topology on $I(X)$. We present a characterization of $\tau_{pp}$  analogous to that well known fact that  the product topology is the smallest topology making all projections continuous. We show that there is an onto map $\pi: S_\infty(Y) \to I(X)$ 
such that $\tau_{pp}$ is the quotient topology given by $\pi$. Moreover, $\tau_{pp}$ is the unique inverse semigroup Hausdorff topology on $I(X)$ with respect to which $\pi$ is continuous. The topology  $\tau_{pp}$  was independently studied in \cite{elliott2019,elliott2020}. Some of the results we present are coming  from \cite{TesisJerson}.

Recently there has been a lot of interest about
groups and semigroups which admits a unique Polish  group (semigroup) topology. A theorem of Kallman \cite{kallman1979} says that $S_\infty(\N)$ has a unique Polish group topology, namely the product topology (see \cite[Exercise 2.3.9]{GA}). The same happens with the group of homeomorphisms of the Cantor space as shown by Rosendal and Solecki \cite{RosendalSolecki2007}.   Mesyan et al. \cite{MES} showed that the product topology is the unique Polish semigroup topology on $\N^\N$. 
More recently, Elliott et. al made  a very extensive study of Polish semigroup topologies \cite{elliott2019, elliott2020}. In particular,  they showed that there is a unique Polish inverse semigroup topology on $I(\N)$, namely  $\tau_{pp}$.  Their proof is based on a general criteria for getting automatic continuity for $I(\N)$ using the fact that $S_\infty(\N)$ admits an unique Polish group topology. Our approach is quite different but we obtained a  weaker result, since it needs some  extra condition (see Theorem \ref{unicidad2}).

\section{Preliminaries}

A {\em semigroup} is a non-empty set $S$ together an associative binary operation $\circ$. To simplify the notation we sometimes write $st$ in placed of $s\circ t$.
Let $S$ be a semigroup. We say that $S$ is a regular  if for all $s\in S$, there is $t\in S$ such that $st s=s$ and $tst=t$. In this case we say that $t$ is an inverse of $s$. If each element have a unique inverse we call $S$ an inverse semigroup. We denote by  $s^\ast$  the inverse of $s$.

Let $S$ be a semigroup and $\tau$ a topology on $S$. If the multiplication $S\times S\to S$ is continuous, we call $S$ a topological semigroup.  An inverse semigroup $S$ is called topological, if it is a topological semigroup and the function $i:S\to S$, $s\to s^\ast$ is continuous.

\begin{prop}\label{iseq}
Let $S$ be a inverse semigroup and $\tau$ a topology on $S$ such that $(S,\tau)$ is a Hausdorff topological semigroup.
Suppose $(s_n)_n$ is a sequence on $S$ such that $s_n\to s$ and $s_n^{\ast}\to t$. Then $t=s^*$.
\end{prop}

\begin{proof} Notice that $s_n=s_ns_n^{\ast}s_n$ and $s_n^{\ast}=s_n^{\ast}s_ns_n^{\ast}$ for all $n\in \N$. By the continuity of the semigroup operation, we have that  $s_n=s_ns_n^{\ast}s_n\to sts$ and $s_n^{\ast}=s_n^{\ast}s_ns_n^{\ast}\to tst$. Thus $s=sts$ and $t=tst$, in other words, $t=s^{\ast}$.
\end{proof}

Let $S$ be an inverse semigroup and $\rho$ a metric on $S$. Define another metric as follow
\[
\rho^\ast(s,t)=\rho(s^\ast,t^\ast).
\]

\begin{prop}\label{i_iso}
Let $S$ be a inverse semigroup and $\rho$ a  metric on $S$.
\begin{itemize}
\item[(i)] $i:(S,\tau_\rho)\to (S,\tau_{\rho^\ast})$, $s\mapsto s^\ast$, is an isometry.  Moreover, if $\rho$ is complete, so is $\rho^\ast$. 

\item[(ii)] Suppose  $(S,\tau_\rho)$ is a topological semigroup, then $(S,\tau_{\rho^\ast})$ is a topological semigroup. 

\end{itemize}
\end{prop}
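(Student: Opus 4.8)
The plan rests on two standard algebraic facts about inverse semigroups that I will use freely: inversion is involutive, $(s^\ast)^\ast = s$ (immediate from the symmetry of the defining equations $ss^\ast s = s$ and $s^\ast s s^\ast = s^\ast$ together with uniqueness of inverses), and inversion is an anti-homomorphism, $(st)^\ast = t^\ast s^\ast$. Both are elementary consequences of uniqueness of inverses and the commutativity of idempotents.

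For part (i), I would first observe that $i$ is distance preserving essentially by definition: $\rho^\ast(i(s), i(t)) = \rho^\ast(s^\ast, t^\ast) = \rho((s^\ast)^\ast, (t^\ast)^\ast) = \rho(s,t)$, using involutivity in the last step. Since $i \circ i = \mathrm{id}_S$, the map $i$ is its own inverse, hence a bijection, so it is a surjective distance-preserving map, i.e. an isometry, and in particular a homeomorphism $(S, \tau_\rho) \to (S, \tau_{\rho^\ast})$. For the completeness claim I would take a $\rho^\ast$-Cauchy sequence $(t_n)_n$; then $\rho(t_n^\ast, t_m^\ast) = \rho^\ast(t_n, t_m)$ shows $(t_n^\ast)_n$ is $\rho$-Cauchy, so by completeness of $\rho$ it converges to some $s \in S$, and then $\rho^\ast(t_n, s^\ast) = \rho(t_n^\ast, (s^\ast)^\ast) = \rho(t_n^\ast, s) \to 0$ gives $t_n \to s^\ast$ in $\rho^\ast$. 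Hence $\rho^\ast$ is complete.

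For part (ii), the key point is to express the multiplication on $(S, \tau_{\rho^\ast})$ as a composition of maps already known to be continuous for $\tau_\rho$. Using the anti-homomorphism property, $st = (t^\ast s^\ast)^\ast$, so the multiplication map $m$ factors as $m = i \circ m \circ \sigma \circ (i \times i)$, where the inner $m$ is the multiplication regarded on $(S, \tau_\rho)$ and $\sigma$ is the coordinate swap on $S \times S$. Each factor is continuous: $i \times i \colon (S, \tau_{\rho^\ast})^2 \to (S, \tau_\rho)^2$ and the final $i \colon (S, \tau_\rho) \to (S, \tau_{\rho^\ast})$ are homeomorphisms by part (i), the swap $\sigma$ is a homeomorphism, and the inner $m$ is continuous by hypothesis. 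Hence $m$ is continuous for $\tau_{\rho^\ast}$ and $(S, \tau_{\rho^\ast})$ is a topological semigroup. Alternatively, since these are metric topologies one can argue sequentially: if $t_n \to t$ and $u_n \to u$ in $\tau_{\rho^\ast}$, then $t_n^\ast \to t^\ast$ and $u_n^\ast \to u^\ast$ in $\tau_\rho$, so $u_n^\ast t_n^\ast \to u^\ast t^\ast$ by $\tau_\rho$-continuity of multiplication, i.e. $(t_n u_n)^\ast \to (tu)^\ast$, which is exactly $t_n u_n \to tu$ in $\tau_{\rho^\ast}$.

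I do not expect a serious obstacle here. The only points requiring care are tracking the coordinate swap forced by the anti-homomorphism $(st)^\ast = t^\ast s^\ast$, and recalling that the definition of topological semigroup used in this paper asks only for continuity of multiplication (not of inversion), so part (ii) needs no separate argument about the inversion map itself.
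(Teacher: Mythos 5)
Your proposal is correct and follows essentially the same route as the paper: the paper's proof of (ii) uses exactly your factorization of the multiplication as $i\circ m\circ(i\times i)\circ\sigma$ (swap and $i\times i$ commute, so the order is immaterial), and it likewise derives completeness of $\rho^\ast$ from the isometry in (i). The only difference is that you spell out part (i) and the sequential alternative, which the paper dismisses as straightforward.
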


\begin{proof}
(i) It is straightforward. 

(ii) Define $g:(S,\rho^*)\times (S,\rho^*)\to S\times S$ such that $(s,t)\to (t,s)$. Note that $g$ is continuous function on $\tau_{\rho^\ast}$ and $\ast=i\circ \ast \circ (i\times i) \circ g$, therefore $\ast$ is a continuous function on $\tau_{\rho^\ast}$. And $\rho^\ast$ is complete since $\rho$ is complete and $i$ is an isometric homeomorphism by Proposition \ref{i_iso}(i).
\end{proof}

\vspace{0.5cm}

\begin{thm}\label{metrica_suma} 
Let $S$ be an inverse semigroup and $\rho$ be a complete metric on $S$ such that $(S,\tau_\rho)$ is a topological semigroup. Let $d=\rho+\rho^*$,  then $d$ is a complete metric on $S$ and $(S,\tau_d)$ is a topological inverse semigroup. 
\end{thm}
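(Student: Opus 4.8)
The plan is to first observe that $d=\rho+\rho^*$ is a metric, being a sum of two metrics, and to record the key reformulation of its topology: a sequence $(s_n)$ converges to $s$ in $\tau_d$ precisely when $\rho(s_n,s)\to 0$ and $\rho^*(s_n,s)=\rho(s_n^*,s^*)\to 0$, i.e.\ when both $s_n\to s$ and $s_n^*\to s^*$ in $\tau_\rho$. This characterization drives the whole argument, and since every space in sight is metrizable I would verify all continuity claims through sequences.

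For completeness, I would take a $d$-Cauchy sequence $(s_n)$, which is simultaneously $\rho$-Cauchy and $\rho^*$-Cauchy. Completeness of $\rho$ yields $s$ with $s_n\to s$ in $\tau_\rho$, and since $\rho^*$ is complete by Proposition \ref{i_iso}(i) there is $t$ with $s_n\to t$ in $\tau_{\rho^*}$, equivalently $s_n^*\to t^*$ in $\tau_\rho$. The crucial step is to identify these two limits: from $s_n\to s$ and $s_n^*\to t^*$ in the Hausdorff topological semigroup $(S,\tau_\rho)$, Proposition \ref{iseq} forces $t^*=s^*$, hence $t=s$. Thus $s_n\to s$ in both $\rho$ and $\rho^*$, so $d(s_n,s)\to 0$ and $d$ is complete.

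For the topological inverse semigroup structure, continuity of inversion is immediate once one computes $d(s^*,t^*)=\rho(s^*,t^*)+\rho(s,t)=d(s,t)$, so that $i\colon s\mapsto s^*$ is in fact a $d$-isometry. For continuity of multiplication I would take $s_n\to s$ and $t_n\to t$ in $\tau_d$ and show $s_nt_n\to st$ in $\tau_d$. The $\rho$-part follows directly from continuity of multiplication in $(S,\tau_\rho)$, since $s_n\to s$ and $t_n\to t$ in $\tau_\rho$. For the $\rho^*$-part I would invoke the anti-homomorphism identity $(s_nt_n)^*=t_n^*s_n^*$ together with $t_n^*\to t^*$ and $s_n^*\to s^*$ in $\tau_\rho$, and again continuity of multiplication in $\tau_\rho$, to obtain $t_n^*s_n^*\to t^*s^*=(st)^*$ in $\tau_\rho$.

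I expect the only genuinely delicate point to be the identification of limits in the completeness argument: without Proposition \ref{iseq} one cannot exclude that the $\rho$-limit and the $\rho^*$-limit disagree, which would destroy $d$-convergence. Everything else — the metric axioms, the isometry computation, and the two halves of multiplication continuity — is routine, resting on the inverse-semigroup identity $(ab)^*=b^*a^*$ and on the equivalence of continuity with sequential continuity in metric spaces.
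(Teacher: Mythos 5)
Your proposal is correct and follows essentially the same route as the paper: completeness via simultaneous $\rho$- and $\rho^*$-Cauchyness with the limits identified by Proposition \ref{iseq}, continuity of inversion via the observation that $i$ is a $d$-isometry, and continuity of multiplication via sequences using $\tau_\rho$-continuity together with $(st)^*=t^*s^*$. The only cosmetic difference is that the paper phrases the isometry fact as $i^{-1}(B_d(s,r))=B_d(s^*,r)$ and leaves the anti-homomorphism identity implicit, both of which you make explicit.
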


\begin{proof}
Let us see that $d$ is complete. Let $(s_n)_n$ be a $d$-Cauchy sequence and $\epsilon>0$, then there is $N\in \mathbb{N}$ such that if $n,m>N$, $d(s_n,s_m)<\epsilon$. Thus $\rho(s_n,s_m)<\epsilon$ and $\rho^\ast(s_n,s_m)<\epsilon$ for all $n,m>N$. Since $\rho$ and $\rho^\ast$ are complete, there are $s,t \in S$ such that $s_n\stackrel{\rho}{\longrightarrow} s$  and $s_n\stackrel{\rho^\ast}{\longrightarrow} t$. By Proposition \ref{iseq}, $t=s$. It is easy to see that $s_n\stackrel{d}{\longrightarrow} s$. 

Now we show that $\tau_d$ is an inverse semigroup topology. First we note that  $i^{-1}(B_{d}(s,r))=B_{d}(s^\ast,r)$, thus  $i$ is continuous. To see that the operation $(S,\tau_d)\times(S,\tau_d)\to(S,\tau_d)$ is continuous, let $(x_n)_n$ and $(y_n)_n$ be sequences such that $x_n\stackrel{d}\to x$ and $y_n\stackrel{d}\to y$, we show that $x_n y_n\stackrel{d}\to x y$. Clearly we have $x_n\stackrel{\rho}\to x$, $y_n\stackrel{\rho}\to y$,  $x_n^\ast\stackrel{\rho}\to x^\ast$ and $y_n^\ast\stackrel{\rho}\to y^\ast$. Since the operation is $\tau_\rho$-continuous, $x_n y_n\stackrel{\rho}\to x y$ and $(x_n y_n)^\ast\stackrel{\rho}\to (x y)^\ast$, therefore $x_n y_n\stackrel{d}\to x y$.
\end{proof}

\vspace{0.5cm}

\section{The partial product topology}

The symmetric inverse semigroup on a set $X$  is defined as follows:
$$
I(X)=\{f:A\rightarrow B\text{ }|\text{ }A,B\subseteq X\text{ and }f \text{ is bijective}\}.
$$
For  $f:A\rightarrow B$  in $I(X)$ we denote $A=\dom(f)$ and $B=\im(f)$. Let 
\[
D_x=\{f\in I(X):\; x\in \dom (f)\}
\]
and $2^X$ denotes the power set of $X$. Let $S_\infty(X)$ be the  symmetric group, that is, the collection of all bijection from $X$ to $X$. The following functions play an  analogous role as the proyection functions  for the product topology. 
\begin{enumerate}
\item[] $\dom: I(X)\to 2^X$, $f\mapsto \dom(f)$,
    
\item[] $\im:  I(X)\to 2^X$, $f \mapsto \im(f)$,
    
\item[] $\ev_{x}: D_{x}\to  X$, $f\mapsto f(x)$ for all $x\in X$.
\end{enumerate}

\medskip

The operation on $I(X)$ is the usual composition, namely, given  $f,g\in I(X)$, then  $f\circ g$ is defined by letting $\dom(f\circ g)=g^{-1}(\dom(f)\cap \im(g))$ and if $x\in \dom(f\circ g)$ then $(f\circ g)(x)=f(g(x))$.
The idempotents of $I(X)$ are the partial identities $1_A:A\to A$, $1_A(x)=x$ for all $x\in A$ and  $A\subseteq X$. Notice that $1_\emptyset$ is the empty function which also belongs to $I(X)$.
For $x,y\in X$, let 
$$
\begin{array}{rcl}
v(x,y) &= & \{f\in I(X)\text{ }|\text{ }x\in \dom(f)\text{ and }f(x)=y\},\\
w_{1}(x) &= & \{f\in I(X)\text{ }|\text{ }x\not\in \dom(f)\},\\
w_{2}(y) &= & \{f\in I(X)\text{ }|\text{ }y\not\in \im(f)\}.
\end{array}
$$
It is clear that sets  $v(x,y)$ are motivated by the usual subbase for the  product topology on $X^X$.   
As we will see, these sets are tightly  related to any $T_1$ semigroup topology on $I(X)$. 

\begin{lema}
\label{opercontinuas}
Let $c:I(X)\times I(X)\to I(X)$ be given by $c(f,g)=f\circ g$ and $i:I(X)\to I(X)$ be given by $i(f)=f^{-1}$. Then, for all $x,y\in X$, we have

\begin{itemize}
\item[(i)] $c^{-1}(v(x,y))=\displaystyle\bigcup_{z\in X}(v(z,y)\times v(x,z)$).
\item[(ii)] $c^{-1}(w_{1}(x))=(I(X)\times w_{1}(x))\cup \displaystyle\bigcup_{z\in X} (w_{1}(z)\times v(x,z))$.

\item[(iii)] $c^{-1}(w_{2}(y))=(w_{2}(y)\times (I(X)) \cup \displaystyle\bigcup_{z\in X} (v(z,y)\times w_{2}(z))$.

\item[(iv)] $i^{-1}(v(x,y))=v(y,x)$,

\item[(v)]$i^{-1}(w_{1}(x))=w_{2}(x)$,

\item[(vi)]$i^{-1}(w_{2}(y))=w_{1}(y)$.
\end{itemize}
\end{lema}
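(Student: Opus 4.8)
The plan is to verify each of the six set equalities by a direct argument that unwinds the definitions of composition, domain, and image in $I(X)$. The three inversion identities (iv)--(vi) are immediate once one records the basic facts $\dom(f^{-1})=\im(f)$, $\im(f^{-1})=\dom(f)$, and that $f^{-1}(y)=x$ is equivalent to $y\in\dom(f)$ and $f(y)=x$. For instance, $f\in i^{-1}(v(x,y))$ means $f^{-1}\in v(x,y)$, i.e. $x\in\im(f)$ and $f^{-1}(x)=y$, which says exactly $y\in\dom(f)$ and $f(y)=x$, namely $f\in v(y,x)$. Identities (v) and (vi) are the same translation applied to the conditions $x\notin\dom(f^{-1})$ and $y\notin\im(f^{-1})$.

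For the composition identities I would first isolate the membership criterion for the domain: from $\dom(f\circ g)=g^{-1}(\dom(f)\cap\im(g))$ one reads off that $x\in\dom(f\circ g)$ if and only if $x\in\dom(g)$ and $g(x)\in\dom(f)$ (the requirement $g(x)\in\im(g)$ being automatic), in which case $(f\circ g)(x)=f(g(x))$. With this in hand, (i) reduces to introducing the intermediate value $z:=g(x)$: the pair $(f,g)$ lies in $c^{-1}(v(x,y))$ exactly when $x\in\dom(g)$, $g(x)=z$, $z\in\dom(f)$, and $f(z)=y$ for this ($z$-unique) value, which is precisely $g\in v(x,z)$ and $f\in v(z,y)$, i.e. $(f,g)\in v(z,y)\times v(x,z)$ for some $z\in X$.

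For (ii) I would negate the domain criterion: $x\notin\dom(f\circ g)$ holds iff either $x\notin\dom(g)$, contributing the factor $I(X)\times w_1(x)$, or $x\in\dom(g)$ while $g(x)\notin\dom(f)$; writing $z:=g(x)$, the latter case is $g\in v(x,z)$ together with $f\in w_1(z)$, accounting for $\bigcup_{z\in X}(w_1(z)\times v(x,z))$. For (iii) I would exploit duality rather than repeat the analysis: since $y\in\im(f\circ g)$ iff $y\in\dom((f\circ g)^{-1})=\dom(g^{-1}\circ f^{-1})$, identity (iii) follows by applying (ii) to the pair $(g^{-1},f^{-1})$ and then using the inversion identities (iv)--(vi) to rewrite the resulting $w_1$ and $v$ conditions on the inverses back into $w_2$ and $v$ conditions on $f$ and $g$. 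Alternatively one computes $\im(f\circ g)=f(\dom(f)\cap\im(g))$ directly and splits on whether $y\in\im(f)$, setting $z:=f^{-1}(y)$ in the second case.

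The individual computations are routine; the one point I would flag as the main source of slips is the bookkeeping of the intermediate value $z$ in the composition cases. One must check both that every pair in the preimage produces such a witness $z$ and, conversely, that any $z$ realizing a term on the right forces all the required domain and evaluation conditions, so that the indexed union is genuinely equal to, and not merely contained in, the preimage. Keeping the two failure modes for the domain, namely $x\notin\dom(g)$ versus $g(x)\notin\dom(f)$, cleanly separated is what makes the union-plus-product shape of (ii) and (iii) come out exactly.
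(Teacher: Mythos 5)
Your verification is correct and is exactly the argument the paper intends: the paper's own proof of this lemma is simply ``It is straightforward,'' and your unwinding of the definitions (including the clean derivation of (iii) from (ii) via inversion) supplies precisely that routine check. The only flaw is a typographical slip in your preliminary inversion fact --- as written, $f^{-1}(y)=x$ is equivalent to $x\in\dom(f)$ and $f(x)=y$, not to $y\in\dom(f)$ and $f(y)=x$ --- but your concrete application in (iv), namely $f^{-1}(x)=y$ iff $y\in\dom(f)$ and $f(y)=x$, is stated correctly, so the proof itself is unaffected.
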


\begin{proof}
It is straightforward. 
\end{proof}

Now we introduce some  semigroup topologies on $I(X)$.  Let $\tau_0$ be the topology generated by $\{ v(x,y): x, y\in X\}$, $\tau_1$ be  the topology generated by   $\{v(x,y), w_1(x): x,y\in X\}$,  $\tau_2$ be the topology generated by  $\{v(x,y), w_2(y): x,y\in X\}$ and $\tau_{pp}$ be the topology generated by $\{v(x,y), w_1(x), w_2(y): x,y\in X\}$. These topologies were independently defined in \cite{elliott2019,elliott2020} and show essentially the following. 

\begin{thm}
\label{topI(X)}
\begin{itemize}
\item[(i)]  $(I(X), \tau_0)$ is a $T_0$ topological semigroup but it is not $T_1$. 

\item[(ii)] $(I(X), \tau_1)$  is a Hausdorff topological semigroup.

\item[(iii)]  $(I(X),\tau_2)$  is  a Hausdorff topological semigroup and $i:(I(X), \tau_1)\to (I(X),\tau_2)$ is a homeomorphism and an antisomorphism. 

\item[(iv)] $(I(X), \tau_{pp})$  is a Hausdorff topological inverse semigroup.

\end{itemize}
\end{thm}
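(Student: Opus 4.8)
The plan is to verify each of the four items by showing that the relevant maps (composition $c$ and inversion $i$) are continuous and that the stated separation axioms hold, using Lemma~\ref{opercontinuas} as the computational engine. For continuity of the semigroup operation with respect to any of these topologies, I would observe that a map into a space with a given subbase is continuous iff the preimage of each subbasic open set is open; since each $\tau_j$ is generated by some subcollection of $\{v(x,y), w_1(x), w_2(y)\}$, it suffices to check that $c^{-1}$ of each relevant subbasic set is open in the product topology $\tau_j\times\tau_j$. This is exactly what Lemma~\ref{opercontinuas}(i)--(iii) provides: each such preimage is written as a union of products of subbasic sets, hence open. For instance, $c^{-1}(v(x,y))=\bigcup_{z}(v(z,y)\times v(x,z))$ is $\tau_0\times\tau_0$-open, giving continuity of $c$ for $\tau_0$; adding the formulas for $w_1(x)$ and $w_2(y)$ handles $\tau_1$, $\tau_2$, and $\tau_{pp}$ in turn, since the product sets appearing use only subbasic elements that belong to the corresponding generating family.

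For the separation statements I would argue pointwise. To see $(I(X),\tau_0)$ is $T_0$ but not $T_1$, I would separate two distinct partial bijections $f\neq g$ by exhibiting a point where their graphs differ: if some $x$ has $f(x)\neq g(x)$ (with both defined) then $v(x,f(x))$ separates them, while if $x\in\dom(f)\setminus\dom(g)$ the set $v(x,f(x))$ contains $f$ but not $g$. The failure of $T_1$ comes from the fact that $\tau_0$ cannot detect the \emph{absence} of a point from the domain or image: for example $1_\emptyset$ lies in the closure of every point since no $v(x,y)$ excludes a map merely for being ``too large.'' For $\tau_1$ (item (ii)) the extra generators $w_1(x)$ let me separate maps whose domains differ, so I would check Hausdorffness by a case analysis: given $f\neq g$, either they disagree on a common domain point (use disjoint $v(x,y)$, $v(x,y')$), or one has a domain point the other lacks (use $v(x,f(x))$ against $w_1(x)$). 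Item (iii) for $\tau_2$ follows formally from (ii) together with Lemma~\ref{opercontinuas}(iv)--(vi): the inversion map $i$ sends $v(x,y)$ to $v(y,x)$ and interchanges $w_1$ and $w_2$, so $i$ is a homeomorphism $(I(X),\tau_1)\to(I(X),\tau_2)$; being an anti-isomorphism is the algebraic identity $(f\circ g)^{-1}=g^{-1}\circ f^{-1}$, which transports the topological-semigroup structure and Hausdorffness across.

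Finally, for item (iv) I would note that $\tau_{pp}$ is the join of $\tau_1$ and $\tau_2$, so it is Hausdorff (being finer than the Hausdorff $\tau_1$) and continuity of $c$ again follows from Lemma~\ref{opercontinuas}(i)--(iii), now with all three families of subbasic sets available. The new content is that $i$ is continuous for $\tau_{pp}$: by Lemma~\ref{opercontinuas}(iv)--(vi), $i$ permutes the full generating family $\{v(x,y),w_1(x),w_2(y)\}$ among itself, so $i^{-1}$ carries subbasic sets to subbasic sets and is therefore continuous; since $i$ is an involution it is a homeomorphism. Together with continuity of $c$ this makes $(I(X),\tau_{pp})$ a topological inverse semigroup.

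The main obstacle I anticipate is not any single computation but marshalling the case analysis for the separation axioms cleanly, particularly pinning down exactly why $\tau_0$ fails $T_1$ while $\tau_1$ succeeds; the subtlety is that one must carefully track the three genuinely different ways two partial bijections can differ (disagreement on a shared domain point, mismatched domains, mismatched images) and confirm that the generators present in each $\tau_j$ suffice to separate precisely those pairs. The continuity assertions, by contrast, are essentially bookkeeping once Lemma~\ref{opercontinuas} is in hand, since every preimage is already displayed there as a union of basic rectangles.
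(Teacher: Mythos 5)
Your proposal is correct and follows essentially the same route as the paper: Lemma~\ref{opercontinuas} handles continuity of composition (and of inversion for items (iii) and (iv)), the two-case analysis on domains versus values gives $T_0$/$T_2$, and the observation that $1_\emptyset$ has no neighborhood other than $I(X)$ gives the failure of $T_1$. Your write-up is in fact more detailed than the paper's proof, which states these steps only in outline.
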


\begin{proof} Let $\tau$ be any of the topologies mentioned in the hypothesis. From Lemma \ref{opercontinuas} it follows that  $\tau$  is a semigroup topology on $I(X)$.

(i)  It remains to show that the $\tau_0$ is $T_0$. In fact, let  $f, g\in I(X)$ with $f\neq g$. There are several cases to consider, we treat only one,  the others are analogous. Assume there is $x\in \dom(f)\cap \dom(g)$ such that $f(x)\neq g(x)$. Then $f\in v(x, f(x)))$ and $g\not\in v(x,f(x))$. Notice that the only $\tau_0$-open set that contains $1_\emptyset$ is $I(X)$, therefore $(I(X),\tau_0)$ is not $T_1$.

(ii) It remains to show that $\tau_1$ is $T_2$.  This is done easily by analyzing the following cases. Let $f\neq g$ in $I(X)$. (a) There is $x\in \dom(f)\triangle \dom(g)$ (b) There is $x\in \dom(f)\cap \dom(g)$ with $f(x)\neq g(x)$.

(iii)  It follows from Lemma \ref{opercontinuas}. 
(iv)It follows from Lemma \ref{opercontinuas}. 
\end{proof}

We call $\tau_{pp}$ the {\em partial product topology}. The following result shows that $\tau_{pp}$ is minimal among all Hausdorff inverse semigroup topologies on $I(X)$.

\begin{thm}
\label{tppminima}
Suppose $(I(X), \tau)$ is a $T_{1}$ topological semigroup. Then 
\begin{itemize}
\item[(i)] Each $v(x,y)$ is clopen and  $w_1(y)$ and $w_2(y)$ are closed for all $y$.  

\item[(ii)] Suppose $\tau$ is an inverse semigroup topology. The relation $\subseteq$ is closed  on $I(X)\times I(X)$ iff  every $w_1(x)$ is open. 

\item[(iii)] If $\tau$ is an inverse semigroup Hausdorff topology, then $\tau_{pp}\subseteq \tau$. 
\end{itemize}

\end{thm}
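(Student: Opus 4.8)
The plan is to realize each of the sets $v(x,y)$, $w_1(x)$, $w_2(y)$, as well as $D_x=\{f:x\in\dom(f)\}$ and the relation $\subseteq$, as preimages of closed singletons (or of the diagonal) under continuous maps built from the semigroup operations and a few fixed partial identities; the $T_1$ (resp.\ Hausdorff) hypothesis then does all the work. Throughout I write $\epsilon_{x,y}\in I(X)$ for the partial bijection with domain $\{x\}$ and $\epsilon_{x,y}(x)=y$, and I use that in a topological semigroup each left translation $L_a\colon f\mapsto a\circ f$ and each right translation $R_a\colon f\mapsto f\circ a$ is continuous. For part (i) I would first compute the two one-point reductions: one checks directly that $f\circ 1_{\{x\}}=\epsilon_{x,f(x)}$ when $x\in\dom(f)$ and $f\circ 1_{\{x\}}=1_\emptyset$ otherwise, and symmetrically that $1_{\{y\}}\circ f=1_\emptyset$ exactly when $y\notin\im(f)$. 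Hence $w_1(x)=R_{1_{\{x\}}}^{-1}(\{1_\emptyset\})$ and $w_2(y)=L_{1_{\{y\}}}^{-1}(\{1_\emptyset\})$, and since $\{1_\emptyset\}$ is closed by $T_1$, both sets are closed. The crucial point is that the composite $\psi_{x,y}(f)=1_{\{y\}}\circ f\circ 1_{\{x\}}$ is continuous and takes \emph{only the two values} $\epsilon_{x,y}$ (precisely when $f\in v(x,y)$) and $1_\emptyset$ (otherwise); thus $v(x,y)=\psi_{x,y}^{-1}(\{\epsilon_{x,y}\})$ and $I(X)\setminus v(x,y)=\psi_{x,y}^{-1}(\{1_\emptyset\})$ are both closed, so $v(x,y)$ is clopen.

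For part (ii) I would prove the two implications separately. For $(\Leftarrow)$ I use the characterization $f\subseteq g \iff \forall x,y\,(f\in v(x,y)\Rightarrow g\in v(x,y))$, from which the complement of $\subseteq$ equals $\bigcup_{x,y} v(x,y)\times(I(X)\setminus v(x,y))$; writing $I(X)\setminus v(x,y)=w_1(x)\cup\bigcup_{z\neq y}v(x,z)$ and invoking part (i) together with the hypothesis that every $w_1(x)$ is open, each rectangle is open, so $\subseteq$ is closed. For $(\Rightarrow)$ I bring in the inverse structure through the natural partial order: $f\subseteq g \iff f=g\circ f^*\circ f$, and in particular $x\in\dom(f)\iff 1_{\{x\}}\subseteq f^*\circ f$ (since $f^*\circ f=1_{\dom(f)}$ and $1_A\subseteq 1_B\iff A\subseteq B$). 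Therefore the map $F(f)=(1_{\{x\}},\,f^*\circ f)$ is continuous, where continuity of $i\colon f\mapsto f^*$ is exactly what the inverse-semigroup hypothesis supplies, and it satisfies $D_x=F^{-1}(\subseteq)$. If $\subseteq$ is closed then $D_x$ is closed, whence $w_1(x)=I(X)\setminus D_x$ is open.

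For part (iii) it suffices to show that the subbasic sets generating $\tau_{pp}$ are all $\tau$-open. By part (i) each $v(x,y)$ is $\tau$-open. Since $\tau$ is Hausdorff the diagonal $\Delta\subseteq I(X)\times I(X)$ is closed, and with $H(f,g)=(f,\,g\circ f^*\circ f)$ (continuous, again using continuity of $i$) one has $\subseteq=H^{-1}(\Delta)$, so $\subseteq$ is closed; part (ii) then gives that each $w_1(x)$ is $\tau$-open. Finally, by Lemma \ref{opercontinuas}(v) we have $w_2(x)=i^{-1}(w_1(x))$, which is $\tau$-open because $i$ is continuous. Hence all generators of $\tau_{pp}$ lie in $\tau$, and therefore $\tau_{pp}\subseteq\tau$.

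The main obstacle is conceptual rather than computational: one must guess the right algebraic expressions so that natural semigroup maps collapse onto \emph{finitely many} values (for (i)) or factor the relevant set through a closed singleton, the diagonal, or the order relation $\subseteq$ (for (ii) and (iii)). In particular, the forward implication of (ii) is the delicate step, since it is precisely there that continuity of inversion is needed to convert closedness of $\subseteq$ into openness of $w_1(x)$; the backward implication and the treatment of $w_2$ in (iii) then follow formally via the involution $i$.
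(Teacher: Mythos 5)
Your proposal is correct and takes essentially the same approach as the paper: your two-valued sandwich map $\psi_{x,y}(f)=1_{\{y\}}\circ f\circ 1_{\{x\}}$ plays the role of the paper's $\varphi(h)=u_{y,x}\circ h\circ u_{y,x}$ in (i), your criterion $x\in\dom(f)\iff 1_{\{x\}}\subseteq f^*\circ f$ is exactly the paper's $u_{x,x}\subseteq f^{-1}\circ f$ argument in (ii), and your identity $f\subseteq g\iff f=g\circ f^*\circ f$ together with closedness of the diagonal under Hausdorffness mirrors the paper's $f\subseteq g\iff f=f\circ f^{-1}\circ g$ in (iii), with $w_2$ handled by inversion in both cases. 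The only cosmetic difference is that you obtain closedness of $w_1(x)$ and $w_2(y)$ directly as preimages of $\{1_\emptyset\}$ under translations, whereas the paper writes their complements as $\bigcup_{z}v(x,z)$ and $\bigcup_{z}v(z,y)$.
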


\begin{proof}
(i) For each $x,y\in X$, let  $u_{x,y}\in I(X)$ be such that $\dom(u_{x,y})=\{x\}$ and $\im(u_{x,y})=\{y\}$.  Let $\varphi:I(X)\to I(X)$ given by $\varphi(h)=u_{y,x}\circ h \circ u_{y,x}$. Then $\varphi$ is continuous and 
\[
I(X)\setminus v(x,y)=\varphi^{-1}(1_\emptyset).
\]
Thus $v(x,y)$ is open. We also have that $
v(x,y)=\varphi^{-1}(u_{y,x})$.
Thus $v(x,y)$ is also closed. To see that $w_1(x)$ and $w_2(y)$ are  closed observe that $
I(X)\setminus w_1(x)=\bigcup_{z\in X}v(x,z)$ and $I(X)\setminus w_2(y)=\bigcup_{z\in X} v(z,y)$. 

(ii) Suppose $\subseteq$ is closed. Then $f\not\in w_1(x)$ iff $u_{x,x}\subseteq f^{-1}\circ f$. Thus the complement of $w_1(x)$ is closed.  Conversely, $f\not\subseteq g$ iff for some $ x,y\in X$ we have  that $f\in v(x,y)$ and  $g\in ((I(X)\setminus v(x,y))\cup w_1(x)$.  From this  follows that the complement of $\subseteq$ is open. 

(iii) Suppose that $\tau$ is an inverse semigroup Hausdorff topology on $I(X)$. From (i) we have that each $v(x,y)$ is $\tau$-open. To see that each $w_1(x)$ is $\tau$-open, by (ii), it suffices to show that $\subseteq$ is $\tau\times\tau$-closed. In fact,  notice that $f\subseteq g$ iff $f=f\circ f^{-1}\circ g$. Since $\circ$ is $\tau$-continuous, then $\subseteq$ is closed. Finally, since $\tau$ is an inverse semigroup topology and each $w_1(x)$ is $\tau$-open,  from Lemma \ref{opercontinuas} we conclude that each $w_2(y)$ is also $\tau$-open.  Thus $\tau_{pp}\subseteq \tau$. 
\end{proof}

\begin{cor}
$(I(X),\tau_{pp})$ is a regular space.
\end{cor}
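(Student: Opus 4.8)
The plan is to show that $(I(X),\tau_{pp})$ is \emph{zero-dimensional}, i.e.\ admits a basis of clopen sets, and then to invoke the standard fact that a zero-dimensional Hausdorff space is regular. The whole argument rests on recognizing that every subbasic set generating $\tau_{pp}$ is not merely open but clopen.

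First I would note that $\tau_{pp}$ is Hausdorff by Theorem \ref{topI(X)}(iv), hence in particular a $T_1$ topological semigroup, so Theorem \ref{tppminima}(i) applies with $\tau=\tau_{pp}$. That theorem gives that each $v(x,y)$ is $\tau_{pp}$-clopen and that each $w_1(x)$ and $w_2(y)$ is $\tau_{pp}$-closed. But $w_1(x)$ and $w_2(y)$ belong by definition to the generating family of $\tau_{pp}$, so they are also open; combining the two facts, every subbasic set $v(x,y)$, $w_1(x)$, $w_2(y)$ is in fact clopen. Since a finite intersection of clopen sets is clopen, every basic $\tau_{pp}$-open set (a finite intersection of subbasic sets) is clopen, and so $(I(X),\tau_{pp})$ has a basis of clopen sets.

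To conclude regularity I would argue directly: given $f\in I(X)$ and a $\tau_{pp}$-closed set $C$ with $f\notin C$, the set $I(X)\setminus C$ is an open neighborhood of $f$, so by zero-dimensionality there is a basic clopen set $U$ with $f\in U\subseteq I(X)\setminus C$. Then $U$ and $I(X)\setminus U$ are disjoint open sets with $f\in U$ and $C\subseteq I(X)\setminus U$, which is exactly the separation required; together with the Hausdorff property from Theorem \ref{topI(X)}(iv) this yields that $(I(X),\tau_{pp})$ is a regular ($T_3$) space.

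I do not anticipate a genuine obstacle here; the only point requiring care is the justification that the subbasic sets are clopen, which is precisely where Theorem \ref{tppminima}(i) is used in tandem with the trivial observation that $w_1(x)$ and $w_2(y)$ are open by the very definition of $\tau_{pp}$. Everything after that is the routine passage from a clopen subbasis to a clopen basis and the elementary separation argument for zero-dimensional spaces.
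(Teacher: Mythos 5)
Your proof is correct and follows exactly the route the paper intends: the corollary is stated right after Theorem \ref{tppminima} precisely because part (i) (each $v(x,y)$ clopen, each $w_1(x)$, $w_2(y)$ closed) combined with the fact that $w_1(x)$, $w_2(y)$ are open by definition of $\tau_{pp}$ yields a clopen basis, hence zero-dimensionality and regularity. Nothing is missing; your explicit separation argument is the standard completion of this observation.
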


Next we present a characterization of when the collection of idempotent of $I(X)$ is compact. 

\begin{thm}
\label{compact-idempotent}
Let $\tau$ be an inverse semigroup Hausdorff topology on $I(X)$. The map $A\mapsto 1_A$ from $2^X$ to $I(X)$ is continuous iff the collection of idempotents is compact. 
\end{thm}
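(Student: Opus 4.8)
The plan is to regard $2^X$ as the Cantor cube $\{0,1\}^X$ with its product topology, which is compact and Hausdorff by Tychonoff's theorem, and to observe that $\iota\colon A\mapsto 1_A$ is a bijection of $2^X$ onto the set $E$ of idempotents of $I(X)$ (the partial identities). The forward implication is the easy half: if $\iota$ is continuous into $(I(X),\tau)$, then $E=\iota(2^X)$ is the continuous image of a compact space and is therefore compact. So the whole content lies in the converse, and I would organize it around a topology-comparison argument on $E$.

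The key preliminary step I would carry out is that $\iota$ is \emph{automatically} continuous as a map into $(I(X),\tau_{pp})$, regardless of compactness. This is just a computation of preimages of the subbasic sets generating $\tau_{pp}$: one finds $\iota^{-1}(v(x,y))=\{A\subseteq X : x\in A\}$ when $y=x$ and $\iota^{-1}(v(x,y))=\emptyset$ otherwise, while $\iota^{-1}(w_1(x))=\{A : x\notin A\}$ and $\iota^{-1}(w_2(y))=\{A : y\notin A\}$. Each of these is a basic clopen subset of $2^X$ (or empty), so $\iota\colon 2^X\to(I(X),\tau_{pp})$ is continuous. Writing $E$ with its subspace topology, $\iota\colon 2^X\to (E,\tau_{pp}|_{E})$ is then a continuous bijection from a compact space onto a Hausdorff space, since $(I(X),\tau_{pp})$ is Hausdorff by Theorem \ref{topI(X)}; hence it is a homeomorphism, and in particular $(E,\tau_{pp}|_{E})$ is compact Hausdorff.

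Now assume $E$ is $\tau$-compact. By Theorem \ref{tppminima}(iii) we have $\tau_{pp}\subseteq\tau$, and therefore $\tau_{pp}|_{E}\subseteq\tau|_{E}$. I would then compare these two topologies on $E$: the identity map $(E,\tau|_{E})\to(E,\tau_{pp}|_{E})$ is a continuous bijection whose domain is compact (by hypothesis) and whose codomain is Hausdorff, so it is a homeomorphism and the two subspace topologies coincide, $\tau|_{E}=\tau_{pp}|_{E}$. Consequently the homeomorphism $\iota\colon 2^X\to(E,\tau_{pp}|_{E})$ from the previous step is also a homeomorphism onto $(E,\tau|_{E})$, and composing with the inclusion $E\hookrightarrow(I(X),\tau)$ shows that $\iota\colon 2^X\to(I(X),\tau)$ is continuous, completing the converse.

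The single load-bearing ingredient is the rigidity of compact Hausdorff topologies used in the last paragraph: a compact topology cannot strictly refine a comparable Hausdorff one, so $\tau$-compactness of $E$ is exactly what forces $\tau$ to agree with $\tau_{pp}$ on $E$. Everything else is routine, namely the Tychonoff compactness of $2^X$ and the preimage computation; the conceptual point I would emphasize is that $\tau_{pp}$ \emph{already} identifies $E$ with $2^X$ as a topological space, so the only way for the finer topology $\tau$ to destroy continuity of $\iota$ is by making $E$ non-compact.
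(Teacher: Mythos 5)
Your proof is correct. Both your argument and the paper's rest on exactly the same two pillars: the minimality theorem (Theorem \ref{tppminima}(iii), giving $\tau_{pp}\subseteq\tau$) and the rigidity of compact spaces (a continuous bijection from a compact space onto a Hausdorff space is a homeomorphism). The difference is in how the rigidity is deployed. The paper applies it exactly once, to the map $\dom|_J\colon (J,\tau)\to 2^X$, where $J$ is the set of idempotents: this map is continuous because $\dom$ is $\tau_{pp}$-continuous by Theorem \ref{proyectiva} and $\tau_{pp}\subseteq\tau$, it is a bijection, its domain is compact by hypothesis and its codomain is Hausdorff, so it is a homeomorphism --- and its inverse is precisely $A\mapsto 1_A$, which finishes the converse in one stroke. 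You instead work with the map $\iota\colon A\mapsto 1_A$ in the forward direction and apply rigidity twice: first to identify $2^X$ with $(E,\tau_{pp}|_E)$, then to the identity map on $E$ to conclude that $\tau|_E=\tau_{pp}|_E$. Your arrangement has the merit of making explicit the conceptual point that $\tau$-compactness of the idempotents forces $\tau$ and $\tau_{pp}$ to agree on them, which the paper leaves implicit; the paper's arrangement is more economical, since it exploits the already-established continuity of $\dom$ instead of recomputing subbasic preimages (your preimage computation for $\iota$ is, in effect, a direct proof of Theorem \ref{proyectiva}(iv), which the paper derives from part (iii) of that theorem). Either way, no step is missing: your use of Tychonoff for $2^X$, of Theorem \ref{topI(X)}(iv) for Hausdorffness of $\tau_{pp}$, and of the hypothesis that $\tau$ is an inverse semigroup Hausdorff topology (needed to invoke Theorem \ref{tppminima}(iii)) are all exactly where they should be.
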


\begin{proof}
The collection of idempotent of $I(X)$ is $J=\{1_A:\: A\subseteq X\}$. Thus if the map $A\mapsto 1_A$ is continuous, then the collection of idempotent is compact. Conversely, suppose $J$ is compact. By Theorem \ref{proyectiva},  the function $\dom: (I(X),\tau_{pp})\to 2^X$ is continuous and $\tau_{pp}\subseteq \tau$ (by  Theorem \ref{tppminima}), thus $\dom$ is also continuous with respect to $\tau$. Thus $\dom|_J:(J, \tau)\to \cantor$ is a continuous bijection and hence an homeomorphism as $J$ is compact Hausdorff. Its inverse is the map $A\mapsto 1_A$.
\end{proof}

Next result shows that all three topologies $\tau_1$, $\tau_2$ and $\tau_{pp}$ are different.

\begin{prop}\label{28}
\begin{itemize}
\item[(i)] $w_{2}(y)$ is  $\tau_1$-nowhere dense for all $y\in X$.
\item[(ii)] $w_{1}(x)$ is  $\tau_2$-nowhere dense for all $x\in X$.
\end{itemize}
\end{prop}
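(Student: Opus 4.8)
The plan is to prove (i) by a direct density argument and then obtain (ii) for free by transporting (i) across the homeomorphism $i$.

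For (i), the first step is to observe that $w_2(y)$ is already $\tau_1$-closed: by Theorem \ref{topI(X)}(ii) the space $(I(X),\tau_1)$ is a Hausdorff (in particular $T_1$) topological semigroup, so Theorem \ref{tppminima}(i) applies and $w_2(y)$ is $\tau_1$-closed. Hence $w_2(y)$ is nowhere dense precisely when it has empty interior, i.e. when its complement $C:=\{f\in I(X): y\in\im(f)\}=\bigcup_{z\in X}v(z,y)$ is $\tau_1$-dense. So the real task reduces to showing that $C$ meets every nonempty basic $\tau_1$-open set.

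Such a set has the form $U=\bigcap_{i=1}^{n}v(x_i,y_i)\cap\bigcap_{j=1}^{m}w_1(z_j)$; I would fix some $f\in U$ and assume $y\notin\im(f)$, since otherwise $f\in C$ already. The point to exploit is that the constraints defining $U$ only pin $f$ on the finite set $\{x_1,\dots,x_n\}$ (where $f(x_i)=y_i$ is forced) and only forbid the finite set $\{z_1,\dots,z_m\}$ from $\dom(f)$; elsewhere $f$ is unconstrained. I would then perturb $f$ into some $f'\in U$ with $y\in\im(f')$ in one of two ways. If there is a free domain point $x_0\in\dom(f)\setminus\{x_1,\dots,x_n\}$, redirect it by putting $f'(x_0)=y$ and leaving $f$ unchanged elsewhere; injectivity survives exactly because $y\notin\im(f)$, and since $\dom(f')=\dom(f)$ all the $v$- and $w_1$-constraints still hold. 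Otherwise $\dom(f)=\{x_1,\dots,x_n\}$ is finite, and since $X$ is infinite I can choose a fresh $x^{*}\in X\setminus(\dom(f)\cup\{z_1,\dots,z_m\})$ and set $f'=f\cup\{(x^{*},y)\}$. In both cases $f'\in U\cap C$, so $C$ is dense and (i) follows.

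Part (ii) should then be immediate: by Theorem \ref{topI(X)}(iii) inversion $i$ is a homeomorphism $(I(X),\tau_1)\to(I(X),\tau_2)$, and since $i$ is an involution, Lemma \ref{opercontinuas}(v)--(vi) gives $i(w_2(y))=w_1(y)$; as homeomorphisms preserve nowhere density, (i) transfers to the $\tau_2$-nowhere density of $w_1(y)$ for every $y$, which is exactly (ii). The one step needing real care is the perturbation in (i)---guaranteeing that a usable replacement point always exists---and this is precisely where the infiniteness of $X$ enters. It is genuinely needed: for finite $X$ the topology $\tau_1$ is discrete and $w_2(y)\ni 1_\emptyset$ is open, so the statement fails; I would assume $X$ infinite throughout (the case of interest) or flag the restriction explicitly. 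Everything else---the closedness of $w_2(y)$ and the transfer to (ii)---is formal given the quoted results.
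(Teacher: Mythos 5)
Your proposal is correct and follows essentially the same route as the paper's own proof: both first get that $w_{2}(y)$ is $\tau_1$-closed from Theorem \ref{tppminima}(i), and then show that every nonempty basic $\tau_1$-open set $\bigcap_i v(x_i,y_i)\cap\bigcap_j w_1(z_j)$ contains an element whose image includes $y$, by adjoining or redirecting a point outside the finitely many constrained coordinates (the paper, too, silently uses the infiniteness of $X$ here, which you correctly flag as necessary). The only cosmetic difference is part (ii): the paper just repeats the argument symmetrically, while you transport (i) through the homeomorphism $i\colon (I(X),\tau_1)\to(I(X),\tau_2)$ of Theorem \ref{topI(X)}(iii) together with $i(w_2(y))=w_1(y)$, which is equally valid.
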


\begin{proof}
(i) By Theorem \ref{topI(X)}, $\tau_1$ is a $T_1$  semigroup topology and by Proposition \ref{tppminima} each $w_2(y)$ is $\tau_1$-closed. We show that each $w_2(y)$ has empty $\tau_1$-interior. 
Let $V$ be  the basic $\tau_1$-open set 
\[
V=\bigcap_{i=1}^n v(x_{i},y_{i})\cap \displaystyle\bigcap_{i=1}^m w_{1}(z_{i}).
\]
If $y=y_i$ for some $i$, then $V\cap w_2(y)=\emptyset$. Otherwise, pick $x\not\in\{x_1,\cdots, x_n, z_1, \cdots, z_m\}$. Then  $V\cap v(x,y)\neq\emptyset$ and $V\cap v(x,y)\cap w_2(y)=\emptyset$. 

(ii) It is proved analogously to (i). 

\end{proof}

\begin{prop}
Let $\tau$ be a semigroup topology on $I(X)$. If $w_1(x)$ is open for some $x$, then $w_1(y)$ is open for every $y$.  Moreover, if $\tau$ is an inverse semigroup topology on $I(X)$ and $w_1(x)$ is open for some $x$, then $w_2(y)$ is open for every $y$.
\end{prop}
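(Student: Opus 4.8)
The plan is to realize each $w_1(y)$ as the continuous preimage of $w_1(x)$ under a self-map of $I(X)$ built from a transposition, and then to transport openness from the $w_1$'s to the $w_2$'s using the inversion.

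For the first assertion I would fix $x$ with $w_1(x)$ open, take an arbitrary $y$ (the case $y=x$ being trivial), and let $\sigma\in I(X)$ be the transposition of $x$ and $y$, regarded as an element of $S_\infty(X)\subseteq I(X)$. The map to use is right multiplication $R_\sigma(h)=h\circ\sigma$. First I would check that it is continuous: the map $h\mapsto(h,\sigma)$ is continuous, and since $\tau$ is a semigroup topology, its composition with $c$, which is $R_\sigma$, is continuous. The key computation is that $\sigma$ is total and onto, so $\dom(h\circ\sigma)=\sigma^{-1}(\dom(h))$; hence $x\notin\dom(h\circ\sigma)$ iff $\sigma(x)=y\notin\dom(h)$. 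This shows $R_\sigma^{-1}(w_1(x))=w_1(y)$, and the openness of $w_1(y)$ follows at once as the preimage of an open set under a continuous map.

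For the second assertion I would assume in addition that $\tau$ is an inverse semigroup topology, so that the inversion $i(h)=h^\ast$ is continuous, and invoke the first part to conclude that every $w_1(y)$ is open. By Lemma \ref{opercontinuas}(v) we have $i^{-1}(w_1(y))=w_2(y)$, so $w_2(y)$ is open as the continuous preimage of the open set $w_1(y)$.

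The one point that requires a moment's thought is the choice of right rather than left multiplication: left multiplication by a bijection leaves the domain unchanged, since $\dom(\sigma\circ h)=\dom(h)$, and thus cannot relate $w_1(x)$ to $w_1(y)$, whereas right multiplication pulls the domain back along $\sigma$, moving the forbidden point $x$ exactly to $y$. Beyond this observation I expect no real obstacle; everything reduces to the continuity of $c$ and $i$ together with Lemma \ref{opercontinuas}.
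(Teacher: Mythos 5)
Your proposal is correct and is essentially the paper's own argument: the paper also uses right composition with a bijection $g\in S_\infty(X)$ sending $x$ to $y$ (your transposition is a particular choice) to get $w_1(y)$ as the preimage of $w_1(x)$, and then deduces the openness of each $w_2(y)$ from the continuity of inversion via Lemma \ref{opercontinuas}. Your added remark explaining why right (rather than left) multiplication is needed, and your observation that mere continuity of $R_\sigma$ suffices (the paper asserts it is a homeomorphism, which is more than required), are fine refinements but do not change the approach.
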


\begin{proof} Let $g\in S_\infty(X)$ be such that $g(x)=y$. Let $\varphi:I(X)\to I(X)$ be given by $\varphi(f)=f\circ g$. Clearly $\varphi$ is a homeomorphism and  $ w_1(y)=\varphi^{-1}(w_1(x))$. The second claim follows from the first and Lemma \ref{opercontinuas}. 
\end{proof}

The next result is a generalization of the fact that the product topology is the smallest topology with respect to which  all projections are continuous. 

\begin{thm}
\label{proyectiva}
\begin{itemize}
\item[(i)] $\tau_{pp}$ is the smallest topology such that  $\dom:I(X)\to 2^X$, $\im:I(X)\to 2^X$ and $\ev_{x}:D_x\to X$ ($x\in X$) are continuous, where $2^X$ is endowed with the usual product topology and $X$ with the discrete topology.

\item[(ii)] $\dom$ and $\im$ are open maps when $I(X)$ is endowed with the topology $\tau_{pp}$.
    
\item[(iii)] Let $Y$ be a topological space and   $\varphi:Y\to (I(X),\tau_{pp})$ be a map. Then $\varphi$   is continuous if, and only if, $\dom\circ \varphi$, $\im\circ \varphi$ are continuous and $\ev_x\circ \varphi$ is  continuous on  $\varphi^{-1}(D_x)$ for all $x\in X$.

\item[(iv)] The collection of idempotent is $\tau_{pp}$-compact.
\end{itemize}
\end{thm}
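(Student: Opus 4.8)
The plan is to reduce every assertion to a computation of preimages (or images) of subbasic sets, using that a subbase for the product topology on $2^X$ is given by $\langle x\rangle^+=\{A\subseteq X: x\in A\}$ and $\langle x\rangle^-=\{A\subseteq X: x\notin A\}$, together with the fact that $X$ is discrete. The four identities that drive everything are $\dom^{-1}(\langle x\rangle^-)=w_1(x)$, $\dom^{-1}(\langle x\rangle^+)=D_x=I(X)\setminus w_1(x)$, $\im^{-1}(\langle y\rangle^-)=w_2(y)$, and $\ev_x^{-1}(\{y\})=v(x,y)$. For (i) I would first verify that $\tau_{pp}$ makes the three maps continuous: each $w_i$ is $\tau_{pp}$-clopen (since $D_x=I(X)\setminus w_1(x)=\bigcup_z v(x,z)$ is open), so the displayed preimages under $\dom$ and $\im$ are $\tau_{pp}$-open, and $\ev_x$ is continuous on $D_x$ with its subspace topology because every $\ev_x^{-1}(S)=\bigcup_{y\in S}v(x,y)$ is open and $X$ is discrete. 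For minimality, let $\tau$ be any topology making the three maps continuous; the same identities force $w_1(x),w_2(y)\in\tau$ and $D_x\in\tau$, and since $D_x$ is then $\tau$-open, continuity of $\ev_x$ gives $v(x,y)=\ev_x^{-1}(\{y\})\in\tau$. Thus $\tau$ contains all subbasic generators of $\tau_{pp}$, whence $\tau_{pp}\subseteq\tau$.

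For (iii), the forward direction is immediate: if $\varphi$ is continuous then composing with the maps shown continuous in (i) gives all three conditions (the third being the restriction of $\ev_x\circ\varphi$ to its natural domain $\varphi^{-1}(D_x)$). For the converse it suffices to pull back a subbase of $\tau_{pp}$. Here $\varphi^{-1}(w_1(x))=(\dom\circ\varphi)^{-1}(\langle x\rangle^-)$ and $\varphi^{-1}(w_2(y))=(\im\circ\varphi)^{-1}(\langle y\rangle^-)$ are open by hypothesis, while $\varphi^{-1}(v(x,y))=\varphi^{-1}(D_x)\cap(\ev_x\circ\varphi)^{-1}(\{y\})$ is open because $\varphi^{-1}(D_x)=(\dom\circ\varphi)^{-1}(\langle x\rangle^+)$ is open in $Y$ and $(\ev_x\circ\varphi)^{-1}(\{y\})$ is open in the subspace $\varphi^{-1}(D_x)$, hence open in $Y$.

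Part (iv) then follows with no extra work by applying (iii) to $\varphi:2^X\to I(X)$, $A\mapsto 1_A$: the compositions $\dom\circ\varphi$ and $\im\circ\varphi$ both equal the identity on $2^X$, and $\ev_x\circ\varphi$ is constantly equal to $x$ on $\varphi^{-1}(D_x)=\langle x\rangle^+$, so all three conditions hold and $\varphi$ is continuous. Since $2^X$ is compact, the idempotent set $J=\varphi(2^X)$ is $\tau_{pp}$-compact.

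The genuine obstacle is (ii), and it is the only step requiring a construction. Since $\dom$ commutes with unions, it suffices to show $\dom(V)$ is open for a basic open set $V=\bigcap_{i\le n}v(x_i,y_i)\cap\bigcap_{j\le p}w_1(a_j)\cap\bigcap_{k\le q}w_2(b_k)$. The claim I would prove is that whenever $V\neq\emptyset$ one has $\dom(V)=\{A: x_i\in A \text{ for all } i,\ a_j\notin A \text{ for all } j\}$, a basic open subset of $2^X$; the inclusion $\subseteq$ is trivial, and the reverse inclusion is where I must \emph{realize} a prescribed domain. Given such an $A$, I put $f(x_i)=y_i$ and extend $f$ to an injection of $A\setminus\{x_1,\dots,x_n\}$ into $X\setminus(\{y_1,\dots,y_n\}\cup\{b_1,\dots,b_q\})$; this is possible when $X$ is infinite because deleting finitely many points leaves a set of full cardinality, and nonemptiness of $V$ supplies exactly the consistency conditions $\{x_i\}\cap\{a_j\}=\emptyset$ and $\{y_i\}\cap\{b_k\}=\emptyset$ needed for the resulting $f$ to lie in $V$. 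The case of finite $X$ is vacuous, as $2^X$ is then discrete. Finally, $\im$ is handled without repeating the argument: since $\tau_{pp}$ is an inverse semigroup topology, the inversion $i(f)=f^{-1}$ is a self-homeomorphism of $(I(X),\tau_{pp})$ satisfying $\dom\circ i=\im$, so $\im=\dom\circ i$ is open as a composite of a homeomorphism with the open map $\dom$.
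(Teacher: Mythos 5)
Your proof is correct, and for (i) and (iv) it matches the paper's argument: the same subbasic-preimage identities $w_1(x)=\dom^{-1}(\{A:x\notin A\})$, $w_2(y)=\im^{-1}(\{A:y\notin A\})$, $v(x,y)=\ev_x^{-1}(\{y\})$, and the same map $A\mapsto 1_A$ fed into (iii). It diverges in two worthwhile ways. First, you prove (iii) by pulling back the subbasic generators of $\tau_{pp}$ directly, via $\varphi^{-1}(v(x,y))=\varphi^{-1}(D_x)\cap(\ev_x\circ\varphi)^{-1}(\{y\})$ and the analogous formulas for $w_1,w_2$; the paper instead fixes a basic open $V$ and sandwiches each point of $\varphi^{-1}(V)$ inside $(\dom\circ\varphi)^{-1}(\dom(V))\cap(\im\circ\varphi)^{-1}(\im(V))\cap\bigcap_i(\ev_{x_i}\circ\varphi)^{-1}(\{y_i\})$, which makes its proof of (iii) depend on the openness statement (ii). Your route decouples (iii) from (ii), which is cleaner. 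Second, in (ii) --- the only part with genuine content --- the paper merely states the identity $\dom(V)=\{A:\forall i\,(x_i\in A)\text{ and }\forall j\,(u_j\notin A)\}$ and calls it clearly open, whereas you prove the nontrivial inclusion by explicitly realizing each admissible $A$ as the domain of an element of $V$, and you supply two caveats the paper's bare identity actually requires: nonemptiness of $V$ (if some $y_i$ coincides with some $z_k$ then $V=\emptyset$ while the right-hand side can be nonempty) and infiniteness of $X$ (for finite $X$ the extension can be blocked, though openness is then trivial since $2^X$ is discrete). Your treatment of $\im$ as $\dom\circ i$, with $i$ the inversion homeomorphism, is also a neat economy compared with the paper's symmetric computation; it is legitimate here since Theorem \ref{topI(X)}(iv) makes inversion a self-homeomorphism of $(I(X),\tau_{pp})$.
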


\begin{proof}
(i) For each $x\in X$, let $v_{x}=\{A\subseteq X: \;x\in A\}$. Each $v_x$ is clopen in $2^X$ and they form a subbasis for $2^X$. The result follows immediately from the next identities:
\[
w_1(x)=(\dom)^{-1}(2^X\setminus v_{x}), \;\;w_2(x)=(\im)^{-1}(2^X\setminus v_{x}),\;\; v(x,y)=\ev_{x}^{-1}(\{y\}).
\]

(ii) Let $\{x_i:\; 1\leq i\leq n\}$, $\{y_i: \; 1\leq i\leq n\}$, $\{u_j:\; 1\leq j\leq m\}$ and $\{z_k\; 1\leq k\leq l\}$  be finite subsets of $\N$ and consider the basic open set in $I(\N)$:
\[
V=\bigcap_{i=1}^n v(x_i,y_i)\cap \bigcap_{j=1}^m w_1(u_j)\cap \bigcap_{k=1}^l w_2(z_k).
\]
Then 
\[
\{\dom(f):\; f\in V\}=\{A\in 2^X:\; \forall i\leq n (x_i\in A) \;\;\text{and}\;\; \forall j\leq m  ( u_j\not\in A)\}. 
\]
is clearly open. Analogously 
\[
\{\im(f):\; f\in V\}=\{B\in 2^X:\; \forall i\leq n (y_i\in B) \;\;\text{and}\;\; \forall k\leq l  ( z_k\not\in B)\}. 
\]

(iii) Let $\varphi:Y\to I(X)$ satisfying the hypothesis we will show that $\varphi$ is continuous. Let $V\subseteq I(X)$ be a basic open set as in (ii). Let $y\in \varphi^{-1}(V)$, then $x_i\in \dom (\varphi(y))$ for $1\leq i\leq n$. By (ii)  $\dom(V)$  and $\im(V)$ are open in $2^X$. We claim that   
\[
y\in  (\dom\circ \varphi)^{-1}(\dom(V)) \cap (\im\circ  \varphi)^{-1}(\im(V))\cap\bigcap_{i=1}^n (\ev_{x_i}\circ \varphi)^{-1}(\{y_i\})\;\; \subseteq \varphi^{-1}(V).
\]
In fact, since $\dom(\varphi(y)) \in \dom(V)$, $u_i\not\in \dom(\varphi(y))$ for all $j\leq m$. Analogously, $z_k\not\in \im (\varphi(y))$ for all $k\leq l$.  Finally, as $y \in  (\ev_{x_i}\circ \varphi)^{-1}(\{y_i\})$, then $\varphi(y)(x_i)=y_i$. Thus $\varphi(y)\in V$. We have shown that $\varphi^{-1}(V)$ is open and hence $\varphi$ is continuous.

(iv) This follows from Theorem \ref{compact-idempotent}. In fact, let $\varphi:2^X\to (I(X), \tau_{pp})$ given by $\varphi(A)=1_A$. We claim that $\varphi$ is continuous.  Notice that $(\dom \circ \varphi)(A)=(\im\circ\varphi) (A)= A$ for all $A$. And $(\ev_x\circ \varphi)(A)=x$ for all $A$ with $x\in A$. Thus,  by (iii),  $\varphi$ is continuous. 

\end{proof}

Now we characterize convergence of nets with respect to the topologies defined above. 

\begin{thm}\label{conv}
Let $(f_{\lambda})_{\lambda\in \Lambda}$  be a net in $I(X)$ and $f\in I(X)$. Then, $f_{\lambda}\stackrel{\tau_1}\to f$ if and only if for all $x\in X$ 

\begin{itemize}
\item[(i)] If $x\in \dom(f)$, then there is $\lambda_{0}\in \Lambda$ such that if $\lambda\geq \lambda_{0}$ then $x\in \dom(f_{\lambda})$ and $f_{\lambda}(x)=f(x)$.

\item[(ii)] If $x\not\in \dom(f)$, then there is $\lambda_{0}\in \Lambda$ such that if $\lambda\geq \lambda_{0}$ then $x\not\in \dom(f_{\lambda})$.
\end{itemize}
\end{thm}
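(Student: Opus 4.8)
The plan is to reduce everything to the defining subbasis of $\tau_1$ and then read off the two conditions directly. Recall that $\tau_1$ is generated by the subbasis $\{v(x,y), w_1(x): x,y\in X\}$, so a net converges to $f$ in $\tau_1$ precisely when it is eventually inside every subbasic open set that contains $f$. Indeed, a basic open neighborhood of $f$ is a finite intersection of such subbasic sets, and since $\Lambda$ is directed, being eventually in each of finitely many subbasic neighborhoods forces the net to be eventually in their intersection (take an upper bound of the finitely many threshold indices). Hence it suffices to characterize when the net is eventually in $v(x,y)$ whenever $f\in v(x,y)$, and eventually in $w_1(x)$ whenever $f\in w_1(x)$.

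For the forward implication, I would assume $f_\lambda \to f$. If $x\in\dom(f)$, I put $y=f(x)$; then $f\in v(x,y)$, which is $\tau_1$-open, so there is $\lambda_0$ with $f_\lambda\in v(x,y)$ for all $\lambda\geq\lambda_0$, and this is exactly condition (i). If $x\notin\dom(f)$, then $f\in w_1(x)$, again $\tau_1$-open, so eventually $f_\lambda\in w_1(x)$, that is $x\notin\dom(f_\lambda)$, which is condition (ii).

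For the converse, I would assume (i) and (ii). By the reduction above I only need to check the two families of subbasic neighborhoods of $f$. If $f\in v(x,y)$ then $x\in\dom(f)$ and $f(x)=y$, so (i) yields a $\lambda_0$ beyond which $x\in\dom(f_\lambda)$ and $f_\lambda(x)=f(x)=y$, i.e. $f_\lambda\in v(x,y)$. If $f\in w_1(x)$ then $x\notin\dom(f)$, so (ii) gives eventual membership $f_\lambda\in w_1(x)$. Thus the net is eventually in every subbasic neighborhood of $f$, hence (by directedness) in every basic neighborhood, so $f_\lambda\to f$.

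There is essentially no hard step here; the only point requiring care is the passage from subbasic to basic neighborhoods, which is precisely where the directedness of the index set $\Lambda$ is used. Everything else is an immediate translation of the meanings of $v(x,y)$ and $w_1(x)$ into statements about $\dom(f_\lambda)$ and the values $f_\lambda(x)$.
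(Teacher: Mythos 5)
Your proof is correct and follows exactly the route the paper intends: the paper's proof is just ``It is straightforward from the definition of $\tau_1$,'' and your argument is precisely that straightforward verification, written out in full. The subbasis-to-basis reduction via directedness of $\Lambda$ is the only point of care, and you handle it correctly.
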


\begin{proof}
It is straightforward from the definition of $\tau_1$.
\end{proof}

\begin{thm}\label{conv2}
Let $(f_{\lambda})_{\lambda\in \Lambda}$ be a net in $I(X)$ and $f\in I(X)$. Then,
\begin{itemize}
\item[(i)]
$f_{\lambda}\stackrel{\tau_1}\to f$ if and only $f_{\lambda}^{-1}\stackrel{\tau_2}\to f^{-1}$.

\item[(ii)] $f_{\lambda}\stackrel{\tau_{pp}}\to f$ if and only $f_{\lambda}\stackrel{\tau_1}\to f$ and $f_{\lambda}\stackrel{\tau_2}\to f$ if, and only if, $f_{\lambda}\stackrel{\tau_1}\to f$ and $f_{\lambda}^{-1}\stackrel{\tau_1}\to f^{-1}$. 

\end{itemize}

\end{thm}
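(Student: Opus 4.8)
The plan is to characterize $\tau_{pp}$-convergence by combining the descriptions of $\tau_1$-convergence from Theorem \ref{conv} with the already-established relationship between $\tau_1$ and $\tau_2$ under the inversion map. Recall that $\tau_{pp}$ is generated by the subbasic sets $v(x,y)$, $w_1(x)$, and $w_2(y)$, while $\tau_1$ is generated by $\{v(x,y), w_1(x)\}$ and $\tau_2$ by $\{v(x,y), w_2(y)\}$. Since the subbasis for $\tau_{pp}$ is exactly the union of the subbases for $\tau_1$ and $\tau_2$, a net converges in $\tau_{pp}$ precisely when it eventually enters every basic $\tau_{pp}$-open neighborhood, which are finite intersections of sets from both families. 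This immediately suggests the first equivalence in (ii).

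For part (i), I would argue directly from the subbases using Lemma \ref{opercontinuas}(iv)--(vi), which records that $i^{-1}(v(x,y))=v(y,x)$, $i^{-1}(w_1(x))=w_2(x)$, and $i^{-1}(w_2(y))=w_1(y)$. These identities say exactly that the inversion map $i:(I(X),\tau_1)\to(I(X),\tau_2)$ sends the subbasis of $\tau_1$ to the subbasis of $\tau_2$, so $i$ is a homeomorphism (this is already stated in Theorem \ref{topI(X)}(iii)). Consequently $f_\lambda\stackrel{\tau_1}\to f$ holds if and only if $i(f_\lambda)=f_\lambda^{-1}\stackrel{\tau_2}\to f^{-1}=i(f)$, which is precisely (i). Alternatively one can unwind the net condition: $f_\lambda^{-1}\stackrel{\tau_2}\to f^{-1}$ means $f_\lambda^{-1}$ eventually lies in each $v(x,y)$ and $w_2(y)$ containing $f^{-1}$, and translating membership in these sets back through inversion gives exactly the $\tau_1$-conditions on $f_\lambda$.

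For part (ii), the first equivalence follows from the subbasis observation: a basic $\tau_{pp}$-neighborhood of $f$ has the form $\bigcap_i v(x_i,y_i)\cap\bigcap_j w_1(u_j)\cap\bigcap_k w_2(z_k)$, and a net is eventually in every such set if and only if it is eventually in every basic $\tau_1$-neighborhood (the $v$ and $w_1$ part) and eventually in every basic $\tau_2$-neighborhood (the $v$ and $w_2$ part). Hence $f_\lambda\stackrel{\tau_{pp}}\to f$ iff $f_\lambda\stackrel{\tau_1}\to f$ and $f_\lambda\stackrel{\tau_2}\to f$. The second equivalence in (ii) then follows immediately by applying part (i) to rewrite $f_\lambda\stackrel{\tau_2}\to f$ as $f_\lambda^{-1}\stackrel{\tau_1}\to f^{-1}$ (taking inverses twice, since $(f_\lambda^{-1})^{-1}=f_\lambda$).

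I do not expect any genuine obstacle here; the result is essentially a bookkeeping consequence of how the three topologies are built from their subbases together with the inversion symmetry already recorded in Lemma \ref{opercontinuas}. The only point requiring minor care is the direction of the argument for the first equivalence in (ii): one must verify that eventual membership in a finite intersection of $\tau_1$-subbasic and $\tau_2$-subbasic sets decomposes into separate eventual membership for the $\tau_1$-part and the $\tau_2$-part, which holds because a net in a directed set is eventually in a finite intersection exactly when it is eventually in each of the finitely many factors (using that the directed set lets one choose a common upper bound).
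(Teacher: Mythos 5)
Your proof is correct and takes essentially the same approach as the paper: part (i) via the inversion map exchanging the subbasic sets of $\tau_1$ and $\tau_2$ (the identities of Lemma \ref{opercontinuas}(iv)--(vi)), and part (ii) from the fact that $\tau_{pp}$ is the supremum of $\tau_1$ and $\tau_2$, with the second equivalence obtained by applying (i). The paper's proof is simply more terse; your subbasis bookkeeping is the same argument written out in detail.
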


\begin{proof}
(i) It follows immediately from the fact that $f\in w_1(y)$ iff $f^{-1}\in w_2(y)$. 

(ii) The first equivalence follows from the fact that $\tau_{pp}$ is the supreme of $\tau_1$ and $\tau_2$. The second one follows from (i). 
\end{proof}

\section{The case $X$ countable}

In this section we will study $I(X)$ when $X$ is a countable set.  We will show that $(I(\mathbb{N}), \tau_{pp})$ is Polish.  We will assume that $X=\N$. In this case $(I(\mathbb{N}), \tau_{pp})$ is a Hausdorff, regular and second-countable space, then by Urysohn theorem, $(I(\mathbb{N}), \tau_{pp})$ is metrizable. In order to define a metric compatible with the partial product topology we define some auxiliary functions.  Let $f,g \in I(\mathbb{N})$ and consider the function $a_{(f,g)}, b_{(f,g)}\in \cantor$ defined by 

\medskip

\begin{itemize}
\item[] 
$a_{(f,g)}(n)=\left\{ \begin{array}{ll}
             0 &   \text{if }   \; n \in (\dom(f)\cap \dom(g))\cup ((\dom(f))^c\cap (\dom(g))^c),  \\
             \\ 1 &  \text{otherwise}.
            \end{array}
 \right.$

\item[] 
$b_{(f,g)}(n)=\left\{ \begin{array}{ll}
             0 &   \text{if } \; n \not\in \dom(f)\cap \dom(g),  \\
         \\ min\{1,|f(n)-g(n)|\} & \text{if } \;  n \in \dom(f)\cap \dom(g). \
             
            \end{array}
   \right.$
\end{itemize}

\medskip

Now consider the following metric
\[
\rho(f,g)=\displaystyle\sum_{n\in \mathbb{N}}\frac{a_{(f,g)}(n)+b_{(f,g)}(n)}{2^n}.
\]

We show next that $(I(\N),\tau_\rho)$ is a Polish semigroup (but not a topological  inverse semigroup). Recall that we also have a metric $\rho^\ast$ on $I(\N)$ given by 
\[
\rho^\ast(f,g)=\rho(f^{-1},g^{-1}).
\]

\begin{prop}
\label{compa}
\begin{itemize}\item[(i)] $(I(\N),\tau_1)$ is metrizable and $\rho$ is a compatible metric for $\tau_1$.

\item[(ii)] $(I(\N), \tau_2)$ is  metrizable and  $\rho^\ast$ is a compatible metric for $\tau_2$.
\end{itemize}
\end{prop}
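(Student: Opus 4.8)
The plan is to show that $\rho$ is a metric compatible with $\tau_1$ by checking that $\rho$-convergence of nets (equivalently, sequences, since the space will turn out to be metrizable) coincides with $\tau_1$-convergence as characterized in Theorem \ref{conv}. Since we already know from the preamble that $(I(\N),\tau_{pp})$ is metrizable via Urysohn, and $\tau_1\subseteq\tau_{pp}$, the cleanest route is to verify directly that the topology $\tau_\rho$ induced by $\rho$ equals $\tau_1$. First I would confirm that $\rho$ is genuinely a metric: symmetry and nonnegativity are immediate from the definitions of $a_{(f,g)}$ and $b_{(f,g)}$, and the series converges because each summand is bounded by $2/2^n$. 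The only slightly delicate point is $\rho(f,g)=0\iff f=g$: if $\rho(f,g)=0$ then $a_{(f,g)}(n)=0$ for all $n$, forcing $\dom(f)=\dom(g)$, and $b_{(f,g)}(n)=0$ on the common domain, forcing $f(n)=g(n)$ there, hence $f=g$. The triangle inequality I would obtain termwise: for $a$ this is just the triangle inequality for the symmetric-difference-type indicator, and for $b$ it follows from $\min\{1,|f(n)-h(n)|\}\le\min\{1,|f(n)-g(n)|\}+\min\{1,|g(n)-h(n)|\}$ together with the convention that $b=0$ off the common domain.

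Next I would establish the compatibility $\tau_\rho=\tau_1$ by comparing convergent sequences. Suppose $\rho(f_k,f)\to 0$. Fix $x\in\N$. If $x\in\dom(f)$, then for large $k$ the term $\frac{a_{(f_k,f)}(x)}{2^x}$ must be small, so $a_{(f_k,f)}(x)=0$, i.e. $x\in\dom(f_k)$; and then the $b$-term forces $\min\{1,|f_k(x)-f(x)|\}=0$, i.e. $f_k(x)=f(x)$ (values are integers). This is precisely condition (i) of Theorem \ref{conv}. If $x\notin\dom(f)$, the $a$-term forces $x\notin\dom(f_k)$ for large $k$, giving condition (ii). Conversely, if $f_k\stackrel{\tau_1}\to f$, then for any $\varepsilon>0$ I would choose $N$ with $\sum_{n>N}2/2^n<\varepsilon/2$, and then use Theorem \ref{conv} to find a single stage past which $a_{(f_k,f)}(n)=b_{(f_k,f)}(n)=0$ for every $n\le N$ (finitely many coordinates), making the head of the series vanish and the tail smaller than $\varepsilon/2$. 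Since convergent sequences determine both topologies (both being first-countable — $\tau_\rho$ metrizable, $\tau_1$ coarser than the metrizable $\tau_{pp}$), this yields $\tau_\rho=\tau_1$, proving (i).

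For part (ii) I would invoke Proposition \ref{i_iso}(i): the inversion map $i\colon(I(\N),\tau_\rho)\to(I(\N),\tau_{\rho^\ast})$ is an isometry, hence a homeomorphism, and by Theorem \ref{topI(X)}(iii) $i\colon(I(\N),\tau_1)\to(I(\N),\tau_2)$ is also a homeomorphism. Since $\rho^\ast(f,g)=\rho(f^{-1},g^{-1})$ is exactly the metric transported across inversion, compatibility of $\rho$ with $\tau_1$ transfers directly to compatibility of $\rho^\ast$ with $\tau_2$: a set $U$ is $\tau_2$-open iff $i^{-1}(U)=i(U)$ is $\tau_1$-open iff it is $\tau_\rho$-open, iff $U$ is $\tau_{\rho^\ast}$-open. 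So (ii) follows formally once (i) is in hand.

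The main obstacle I anticipate is not conceptual but bookkeeping in the converse direction of compatibility: one must carefully separate the finitely many coordinates $n\le N$ (where Theorem \ref{conv} gives eventual exact agreement of both domain-membership and values) from the infinite tail (controlled purely by the geometric decay $2^{-n}$), and then take a common stage over the finitely many relevant coordinates. There is also a minor subtlety that Theorem \ref{conv} is stated for nets, so I would either argue everything at the level of nets directly (which works verbatim, since the finite-coordinate argument is insensitive to whether the index set is $\N$ or a general directed $\Lambda$) or simply note that both topologies are first-countable so sequences suffice. I expect to run the argument with nets to avoid invoking metrizability of $\tau_1$ circularly before it is established.
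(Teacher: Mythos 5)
Your overall plan coincides with the paper's proof: check that $\rho$ is a metric, show that $\rho$-convergence and $\tau_1$-convergence agree via Theorem \ref{conv} (with the head/tail split of the series in the converse direction), and obtain (ii) by transporting everything through the inversion homeomorphism of Theorem \ref{topI(X)}(iii) and Proposition \ref{i_iso}. However, two specific steps are wrong as stated. The triangle inequality does \emph{not} hold termwise for $b$ alone: take $n\in \dom(f)\cap\dom(h)$ with $f(n)\neq h(n)$ and $n\notin\dom(g)$; then $b_{(f,h)}(n)=1$ while $b_{(f,g)}(n)=b_{(g,h)}(n)=0$ precisely because of your ``convention that $b=0$ off the common domain'', so $\min\{1,|f(n)-h(n)|\}\le \min\{1,|f(n)-g(n)|\}+\min\{1,|g(n)-h(n)|\}$ fails. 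The correct termwise statement is for the combined summand:
\[
a_{(f,h)}(n)+b_{(f,h)}(n)\;\le\; a_{(f,g)}(n)+b_{(f,g)}(n)+a_{(g,h)}(n)+b_{(g,h)}(n),
\]
which holds because in the problematic case the two $a$-terms on the right both equal $1$ and absorb the left-hand side. (The paper dismisses the metric verification as straightforward, but if you write it out you must couple $a$ and $b$.)

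The second slip is the claim that $\tau_1$ is first-countable \emph{because} it is coarser than the metrizable $\tau_{pp}$: that implication is false in general (the cofinite topology on an uncountable set is coarser than the discrete, hence metrizable, topology, yet is not first-countable). The conclusion is still true, but for a different reason: $\tau_1$ is generated by the countable subbasis $\{v(x,y),\, w_1(x): x,y\in\N\}$, hence is second-countable. Alternatively, your stated fallback of running the whole argument with nets is clean and needs no countability hypothesis at all, since two topologies with the same convergent nets coincide; this also sidesteps the circularity the paper itself skirts when it asserts metrizability of $\tau_1$ before exhibiting the metric. With those two repairs, your argument is complete, and part (ii) as you argue it is exactly the paper's.
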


\begin{proof}
(i) It is easy to verify that $\rho$ is indeed a metric. 
Since $(I(\mathbb{N}),\tau_1)$ is metrizable, to show that $\rho$ is compatible with $\tau_1$ it suffices to show that they induce the same convergent sequences.

Let $(f_n)_{n\in \mathbb{N}}$ be a sequence and $f\in I(\mathbb{N})$ be such that $f_n\stackrel{\rho}\to f$.
We use Theorem \ref{conv} to show that $f_n\stackrel{\tau_1}\to f$.
Let $m\in \mathbb{N}$, there is $N\in \mathbb{N}$ such that if $n\geq N$, $\rho(f_n,f)<\frac{1}{2^m}$, therefore $a_{(f,f_{n})}(m)=0$ and $b_{(f,f_{n})}(m)=0$, for all $n\geq N$.  
We consider two cases: (a) Suppose $m\in \dom(f)$. Since $a_{(f,f_{n})}(m)=0$ and $b_{(f,f_{n})}(m)=0$,  for all $n\geq N$, we have that $m\in \dom(f_n)$ and $f_{n}(m)=f(m)$, for all $n\geq N$. (b) Suppose $m\not\in \dom(f)$. Since $a_{(f,f_{n})}(m)=0$, for all $n\geq N$, we have that $m\not\in \dom(f_{n})$, for all $n\geq N$. Then, by Theorem \ref{conv}, $f_n\stackrel{\tau_1}\to f$.

 For the other direction, suppose $g_n\stackrel{\tau_1}\to g$. Let $k\in \mathbb{N}$. Consider the following sets
 $$
 A=\{m\in \mathbb{N}\text{ }|\text{ }m\leq k\text{ and }m\in \dom(g)\}$$
 and
 $$
 B=\{m\in \mathbb{N}\text{ }|\text{ }m\leq k\text{ and }m\not\in \dom(g)\}.
 $$ 
 Since $g_n\stackrel{\tau_1}\to g$, there is $N\in \mathbb{N}$ such that if $n\geq N$, $g_n,g\in v(m,g(m))$, for all $m\in A$ and $g_n,g\in w_1(m)$, for all $m\in B$. Thus, $a_{(g,g_{n})}(m)=0$ and $b_{(g,g_{n})}(m)=0$ for all $m\leq k$ and $n\geq N$. Finally, we have that for all $n\geq N$
$$
\rho(g,g_{n}) = \displaystyle\sum_{t=1}^{\infty}\frac{a_{(g,g_{n})}(t)+b_{(g,g_{n})}(t)}{2^t}
=\displaystyle\sum_{t=k+1}^{\infty}\frac{a_{(g,g_{n})}(t)+b_{(g,g_{n})}(t)}{2^t}
\leq \displaystyle\sum_{t=k+1}^{\infty}\frac{1}{2^t}=\frac{1}{2^{k+1}}<\frac{1}{2^{k}}.
$$
Therefore $g_n\stackrel{\rho}\to g$.

\vspace{0.5cm}

(ii) It follows from (i) and Theorem  \ref{topI(X)}.
\end{proof}

\vspace{0.5cm}

The following fact shows that $2^\N$ is naturally embedded into $I(\N)$. Its easy proof is left to the reader. 

\begin{prop}
\label{metrica-cantor}
Let $\eta$ be given by 
$\eta(A,B)=\rho(1_A, 1_B)$, for $A,B\in\cantor$. Then $\eta$ is a compatible metric for $\cantor$.
\end{prop}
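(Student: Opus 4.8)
The plan is to compute $\eta$ explicitly on the partial identities and then recognize it as the classical metric on $\cantor$. Fix $A,B\subseteq\N$. Since $\dom(1_A)=A$, $\dom(1_B)=B$ and $1_A(n)=1_B(n)=n$ for every $n\in A\cap B$, I would first note that the $b$-term drops out entirely: by definition $b_{(1_A,1_B)}(n)=0$ whenever $n\notin A\cap B$, while for $n\in A\cap B$ one has $|1_A(n)-1_B(n)|=|n-n|=0$. For the $a$-term, $a_{(1_A,1_B)}(n)=0$ exactly when $n\in(A\cap B)\cup(A^c\cap B^c)$, i.e. when $\chi_A(n)=\chi_B(n)$; hence $a_{(1_A,1_B)}(n)$ is the indicator of $A\triangle B$. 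Substituting into the definition of $\rho$ gives
\[
\eta(A,B)=\sum_{n\in\N}\frac{|\chi_A(n)-\chi_B(n)|}{2^n}=\sum_{n\in A\triangle B}\frac{1}{2^n},
\]
which is the familiar complete metric on $\cantor$. That $\eta$ is a metric is then immediate: symmetry and the triangle inequality are inherited from $\rho$, and $\eta(A,B)=0$ forces $A\triangle B=\emptyset$, i.e. $A=B$.

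For compatibility I would check that $\tau_\eta$ and the product topology on $\cantor$ have the same convergent sequences, mimicking the proof of Proposition \ref{compa}(i). If $\eta(A_n,A)\to 0$, then for each $m$ eventually $\eta(A_n,A)<2^{-m}$; since coordinate $j$ contributes either $0$ or $2^{-j}$ to the sum, this forces $\chi_{A_n}(j)=\chi_A(j)$ for all $j\le m$ once $n$ is large, so $A_n\to A$ coordinatewise. Conversely, if $A_n\to A$ coordinatewise then for each $k$ there is $N$ with $\chi_{A_n}(j)=\chi_A(j)$ for $j\le k$ and $n\ge N$, whence $\eta(A_n,A)\le\sum_{j>k}2^{-j}$ for such $n$; as $k$ is arbitrary, $\eta(A_n,A)\to 0$. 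Since both topologies are metrizable, hence first countable, agreement on convergent sequences gives $\tau_\eta=$ product topology.

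Alternatively, and more conceptually, I could avoid recomputation by observing that on the set $J=\{1_A:A\subseteq\N\}$ of idempotents the subbasic sets $w_1(y)$ and $w_2(y)$ coincide, because $\im(1_A)=\dom(1_A)=A$; hence $\tau_1$ and $\tau_{pp}$ induce the same trace topology on $J$. By Proposition \ref{compa}(i), $\rho$ is compatible with $\tau_1$, so $\eta=\rho|_J$ is compatible with $\tau_1|_J=\tau_{pp}|_J$; and by Theorem \ref{proyectiva}(iv) together with the compactness of $J$, the bijection $A\mapsto 1_A$ carries the product topology of $\cantor$ homeomorphically onto $(J,\tau_{pp}|_J)$. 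Either way, there is no genuine obstacle: the only point that requires attention is the vanishing of the $b$-term on partial identities, which is exactly what collapses $\rho$ to the classical Cantor-space metric and makes the rest routine.
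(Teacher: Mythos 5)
Your proof is correct. The paper gives no proof of this proposition (it says ``its easy proof is left to the reader''), and your direct computation is exactly the intended argument: on partial identities the $b$-term vanishes, the $a$-term is the indicator of $A\triangle B$, so $\eta(A,B)=\sum_{n\in A\triangle B}2^{-n}$ is the standard complete metric for the product topology on $\cantor$, and your sequence-based compatibility check is legitimate because both topologies are metrizable, hence determined by their convergent sequences.
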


\begin{thm}
\begin{itemize}
\item[(i)] $(I(\N),\tau_{1})$ is a Polish semigroup and  $\rho$ is a complete compatible metric.

\item[(ii)] $(I(\N),\tau_{2})$ is a Polish semigroup and  $\rho^\ast$ is a complete compatible metric.
\end{itemize}
\end{thm}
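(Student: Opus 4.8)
The goal is to show that $(I(\N),\tau_1)$ is a Polish semigroup with $\rho$ a complete compatible metric, and symmetrically for $\tau_2$ and $\rho^*$.

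\medskip

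The plan is to verify the two defining properties of a Polish semigroup separately, leaning on the results already established. First I would note that by Proposition \ref{compa}(i), $\rho$ is a compatible metric for $\tau_1$, so $(I(\N),\tau_1)$ is metrizable; and $I(\N)$ is clearly separable since the partial bijections with finite domain form a countable dense set (any $f\in I(\N)$ is approximated in $\rho$ by its restrictions to initial segments, exactly as in the convergence computation at the end of the proof of Proposition \ref{compa}(i)). That $\tau_1$ is a semigroup topology was already recorded in Theorem \ref{topI(X)}(ii). So the only substantive task is completeness of $\rho$.

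\medskip

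To prove completeness, I would take a $\rho$-Cauchy sequence $(f_n)_n$ and construct a limit $f\in I(\N)$. For each fixed $m$, the Cauchy condition forces the values $a_{(f_n,f_k)}(m)$ and $b_{(f_n,f_k)}(m)$ to be eventually $0$ in $n,k$; since these take values in $\{0,1\}$ (respectively in a discrete set once small), this means the membership status ``$m\in\dom(f_n)$'' and, when $m$ lies in the domain, the value $f_n(m)$ both stabilize as $n\to\infty$. I would define $\dom(f)$ to be the set of $m$ that are eventually in every domain, and set $f(m)$ to be the eventual common value. The step needing genuine care, which I expect to be the main obstacle, is checking that this pointwise-defined $f$ is actually \emph{injective}, i.e.\ a member of $I(\N)$: the metric $\rho$ only controls domains and values, not images, so I must rule out the possibility that two distinct stabilized points $m\neq m'$ acquire the same eventual value $f(m)=f(m')$. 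This is ruled out because each $f_n$ is injective and for large $n$ one has simultaneously $f_n(m)=f(m)$ and $f_n(m')=f(m')$ with $m,m'\in\dom(f_n)$, forcing $f(m)\neq f(m')$. Once $f\in I(\N)$ is in hand, I would verify $f_n\stackrel{\rho}\to f$ by the same initial-segment estimate as in Proposition \ref{compa}: given $k$, choose $N$ so that for $n\geq N$ the functions agree on domains and values for all $m\leq k$, making the tail sum at most $2^{-k}$.

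\medskip

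For part (ii) there is nothing new to do: by Proposition \ref{i_iso}(i) the inversion map $i\colon(I(\N),\tau_\rho)\to(I(\N),\tau_{\rho^*})$ is an isometry, and it carries the compatible metric $\rho$ for $\tau_1$ to the compatible metric $\rho^*$ for $\tau_2$ (Proposition \ref{compa}(ii)), preserving both completeness and separability; that $\tau_2$ is a semigroup topology is Theorem \ref{topI(X)}(iii). Hence $(I(\N),\tau_2)$ is Polish with complete compatible metric $\rho^*$. The only real content of the theorem is therefore the completeness argument of part (i), and within it the injectivity verification described above.
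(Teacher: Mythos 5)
Your proposal is correct and takes essentially the same route as the paper: the paper also reduces everything to completeness of $\rho$ via Proposition \ref{compa}, builds the limit $f$ by stabilization of domains (phrased there as convergence of $(\dom(f_n))_n$ in $\cantor$, which is the same pointwise stabilization you use) and of the values $f_n(m)$, and then notes injectivity and $\rho$-convergence, deducing (ii) from Proposition \ref{i_iso}. Your write-up merely fills in details the paper labels as easy (the injectivity check and separability), so there is no substantive difference.
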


\begin{proof} (i) By Proposition \ref{compa}, we only need to show that $\rho$ is complete. 
Let $(f_n)_n$ be a $\rho$-Cauchy sequence. Then $(\dom(f_n))_n$ is Cauchy in $\cantor$ (by Proposition \ref{metrica-cantor}), so it is convergent. Let $A=\displaystyle\lim_{n \to \infty}\dom(f_{n})$.  It is easy to verify that, for each $p\in A$, $(\ev_p(f_n))_n$ is Cauchy in $\N$ (with the discrete metric) and therefore eventually constant to a value $f(p)$.
Thus we have defined  a function $f$ such that $\dom(f)=A$  and  $f_n\stackrel{\rho}\to f$.
It is easy to see that $f$ is  injective. Finally, from  Theorem \ref{conv}, $f_n\stackrel{\tau_1}\to f$.

(ii) This follows from (i) and Proposition \ref{i_iso}. 
\end{proof}

From the previous result and Proposition \ref{28} we conclude that neither  $\rho$ nor $\rho^*$ are compatible with $\tau_{pp}$. As in Theorem \ref{metrica_suma}, we  define another metric as follows: 
\[
d(f,g)=\rho(f,g)+\rho^\ast(f,g).
\]

\begin{thm}
$(I(\mathbb{N}),\tau_{pp})$ is a Polish inverse semigroup and  $d$ is a complete compatible metric.
\end{thm}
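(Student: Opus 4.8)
The plan is to reduce everything to results already established and then to identify $\tau_d$ with $\tau_{pp}$. First I would invoke the previous theorem, which gives that $(I(\N),\tau_1)$ is a topological semigroup and that $\rho$ is a complete compatible metric for $\tau_1$. Since $I(\N)$ is an inverse semigroup, Theorem \ref{metrica_suma} applies verbatim with this $\rho$, yielding at once that $d=\rho+\rho^\ast$ is a complete metric on $I(\N)$ and that $(I(\N),\tau_d)$ is a topological inverse semigroup. This disposes of completeness and of the inverse-semigroup continuity requirements simultaneously, so the only remaining point is to check that the metric $d$ actually generates the partial product topology, that is, $\tau_d=\tau_{pp}$.

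The key step is the observation that the sum of two metrics generates the supremum of the two induced topologies. Concretely, from $d\geq\rho$ and $d\geq\rho^\ast$ one gets $B_d(f,\epsilon)\subseteq B_\rho(f,\epsilon)\cap B_{\rho^\ast}(f,\epsilon)$, while $B_\rho(f,\epsilon/2)\cap B_{\rho^\ast}(f,\epsilon/2)\subseteq B_d(f,\epsilon)$; these two inclusions show that the $d$-balls and the finite intersections of $\rho$-balls with $\rho^\ast$-balls generate the same topology, so $\tau_d=\tau_\rho\vee\tau_{\rho^\ast}$. By Proposition \ref{compa} we have $\tau_\rho=\tau_1$ and $\tau_{\rho^\ast}=\tau_2$, and by the very definition of $\tau_1$, $\tau_2$, $\tau_{pp}$ as the topologies generated respectively by $\{v(x,y),w_1(x)\}$, $\{v(x,y),w_2(y)\}$ and $\{v(x,y),w_1(x),w_2(y)\}$, the join $\tau_1\vee\tau_2$ is exactly $\tau_{pp}$. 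Hence $\tau_d=\tau_{pp}$, so $d$ is a complete compatible metric for $\tau_{pp}$ and $(I(\N),\tau_{pp})$ is a topological inverse semigroup.

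Finally I would supply separability. Since $(I(\N),\tau_{pp})$ is second-countable, as noted at the start of this section, it is separable; being also completely metrizable via $d$, it is Polish. Alternatively, compatibility can be argued through convergent sequences: $f_n\stackrel{d}\to f$ holds iff $f_n\stackrel{\rho}\to f$ and $f_n\stackrel{\rho^\ast}\to f$, which by Proposition \ref{compa} is $\tau_1$- and $\tau_2$-convergence, and by Theorem \ref{conv2}(ii) this is precisely $\tau_{pp}$-convergence; as both $\tau_d$ and $\tau_{pp}$ are metrizable, agreement of the convergent sequences forces $\tau_d=\tau_{pp}$.

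I do not expect a genuine obstacle here, since the substantive work, namely the completeness of $\rho$ and the general additive-metric construction of Theorem \ref{metrica_suma}, has already been carried out. The only point that requires care is the bookkeeping identity $\tau_d=\tau_\rho\vee\tau_{\rho^\ast}=\tau_{pp}$, where one must state the ball inclusions with the correct radii and recognize the supremum of $\tau_1$ and $\tau_2$ as $\tau_{pp}$.
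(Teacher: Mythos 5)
Your proposal is correct, and its skeleton matches the paper's: both obtain completeness of $d$ and the inverse-semigroup continuity by applying Theorem \ref{metrica_suma} to the complete metric $\rho$ from the preceding theorem, and both reduce compatibility to the relationship between $d$, $\rho$ and $\rho^\ast$. Where you diverge is in how the compatibility $\tau_d=\tau_{pp}$ is finished. The paper's proof is a sketch that says the compatibility argument is ``similar to the proof of Proposition \ref{compa}'', i.e.\ it intends a second sequence-convergence computation with the auxiliary functions $a_{(f,g)}$, $b_{(f,g)}$, this time against $\tau_{pp}$. You instead give a purely formal argument: the two ball inclusions show that a sum of metrics induces the join of the induced topologies, so $\tau_d=\tau_\rho\vee\tau_{\rho^\ast}$; then Proposition \ref{compa} identifies $\tau_\rho=\tau_1$ and $\tau_{\rho^\ast}=\tau_2$, and the join $\tau_1\vee\tau_2$ is $\tau_{pp}$ by comparing the generating subbases (a fact the paper itself uses in Theorem \ref{conv2}). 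This buys you a compatibility proof that uses only the \emph{statements} of earlier results rather than repeating their proof technique, and it makes explicit the lattice-theoretic content that the paper leaves implicit. Your fallback argument via convergent sequences and Theorem \ref{conv2}(ii) is essentially the paper's intended route, made precise by the observation that two metrizable topologies with the same convergent sequences coincide. You also explicitly supply separability from second countability, a point the paper's proof leaves unstated; this is a genuine (if small) improvement in completeness of exposition.
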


\begin{proof} The proof of the compatibility of $d=\rho+\rho^*$ with $\tau_{pp}$ is similar to the proof of Theorem \ref{compa}. The completeness of $d$ follows from  Theorem \ref{metrica_suma} and Proposition \ref{compa}.
\end{proof}

\section{Open operation}

For $f\in I(X)$, let $r_f: I(X)\to R_f$ be given by $r_f(g)=g\circ f$ and 
\[
R_f=\{g\circ f\text{ }|\text{ }g\in I(X)\}.
\]
Define analogously $l_f$ and $L_f$ for the corresponding left operation. 

In contrast to what happens with topological groups, for a topological semigroup it is not true that $Vx$ is open when $V$ is open. Nevertheless,  we show that $r_f$ is an open map. We need this fact in the next section for proving a uniqueness result for $\tau_{pp}$. 

\begin{lema}
\label{oper-abierta}
Let $x,y,w,z\in X$. Then
\begin{enumerate}
\item $v(x,y)\circ v(z,x)=v(z,y)$.

\item $v(x,y)\circ v(z,w)= I(X)\setminus v(z,y)$ if $w\neq x$.
 
 \item $w_1(x)\circ w_1(y)= w_1(y)$. 
 
 \item $w_2(x)\circ w_2(y)=w_2(x)$.
 
\item $w_2(x)\circ w_1(y)=w_2(x)\cap w_1(y)$.

\item $w_1(x)\circ w_2(y)= I(X)$.

\item $w_1(y)\circ  v(x,y)=w_1(x)$.

\item $w_1(z)\circ  v(x,y)=I(X)$ if $z\neq y$.

\item  $v(x,y)\circ w_1(z)=w_1(z)$.

\item   $w_2(z)\circ  v(x,y)= [v(y,x)\circ w_1(z)]^{-1}=[w_1(z)]^{-1}=w_2(z)$.

\item $v(x,y)\circ  w_{2}(z)= [w_1(z)\circ v(y,x)]^{-1}$.
\end{enumerate}

\end{lema}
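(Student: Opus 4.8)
The plan is to verify each of the eleven identities in Lemma~\ref{oper-abierta} by direct computation, unwinding the definition of composition in $I(X)$. Recall that for $f,g\in I(X)$ we have $\dom(f\circ g)=g^{-1}(\dom(f)\cap\im(g))$ and $(f\circ g)(x)=f(g(x))$ on that domain. Each set on the left-hand side is a \emph{pointwise product} $U\circ V=\{u\circ v : u\in U,\, v\in V\}$ of two of the subbasic sets, so the strategy is to show the two inclusions $\subseteq$ and $\supseteq$ for each identity. For the forward inclusion I would take arbitrary $u\in U$ and $v\in V$ and compute where $u\circ v$ sends the relevant point (or check membership in the target $w_i$-set); for the reverse inclusion, given $h$ in the target set, I would \emph{construct} explicit witnesses $u\in U$, $v\in V$ with $u\circ v=h$, typically by using $h$ itself as one factor together with a suitable partial identity or a small elementary map such as the $u_{x,y}$ with singleton domain and image used in Theorem~\ref{tppminima}.

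I would organize the verifications by type. Items (1) and (2) are the ``$v\circ v$'' cases: for (1), if $u(x)=y$ and $v(z)=x$ then $(u\circ v)(z)=y$, giving $\subseteq$; conversely any $h$ with $h(z)=y$ factors through the intermediate point $x$ by inserting a map sending $z\mapsto x$ and one sending $x\mapsto y$. For (2) the point is that when $w\neq x$ the intermediate value $w$ produced by $v$ need not lie in $\dom(u)=$ the domain forcing $u(x)=y$, so $z$ can be pushed out of the domain, and one checks the product lands in $I(X)\setminus v(z,y)$ and that every such $h$ is realized. Items (3)--(6) are the ``$w\circ w$'' cases, where membership in a $w_1$ or $w_2$ set is governed purely by domain/image conditions: here I would track how $\dom$ and $\im$ transform under composition, noting e.g. that a missing point in $\im(g)$ survives in $\im(f\circ g)$ appropriately. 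Items (7)--(9) mix $w_1$ with $v$, and (10)--(11) reduce to earlier cases via the inversion identities $i^{-1}(w_1(x))=w_2(x)$ and $i(f\circ g)=g^{-1}\circ f^{-1}$ from Lemma~\ref{opercontinuas}, which is exactly why (10) and (11) are already written as inverses of previously established products.

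The main obstacle I expect is not any single computation but getting the reverse inclusions airtight: for each target set one must exhibit a factorization, and the delicate cases are those where the target is all of $I(X)$ (items (6) and (8)) or a proper complement (item (2)), since there one must show that \emph{every} $h\in I(X)$ (or every $h$ outside the excluded $v(z,y)$) decomposes with factors in the prescribed subbasic sets. The trick throughout will be to write $h=h\circ 1_{\dom(h)}$ or $h=1_{\im(h)}\circ h$ and then perturb the identity factor into the required $w_1$ or $w_2$ set by adjusting a single point, which is always possible because these sets impose only one domain/image constraint. The inversion-based items (10) and (11) should then be essentially automatic once (7), (9) and their analogues are in hand, so the bulk of the genuine work is concentrated in (1), (2) and the handful of cases producing all of $I(X)$.
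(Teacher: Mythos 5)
Your proposal is correct and takes essentially the same approach as the paper: both verify each identity by proving two inclusions, obtaining the reverse inclusion by factoring a given $h$ as $(h\circ k^{-1})\circ k$, where $k$ is a bijection from $\dom(h)$ (suitably extended when needed, as in items (2) and (8)) onto an intermediate set chosen to satisfy the finitely many domain/image constraints, and both dispatch items (10)--(11) by inversion. The only caveat is that ``perturbing the identity factor'' must be understood as simultaneously replacing the other factor by $h\circ k^{-1}$ (the paper makes this explicit by routing through an auxiliary set $A$ with $|A|=|\dom(h)|$), and occasionally two points rather than one must be adjusted, e.g.\ in item (6) the intermediate set must avoid both $x$ and $y$.
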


\begin{proof} All items are proved in an analogous way.

1. It's clear that $v(x,y)\circ v(z,x)\subseteq v(z,y)$. For the other inclusion, let $f\in v(z,y)$ and pick $A\subseteq X$ such that $x\in A$ and $|A|=|\dom(f)|$. Let  $h:\dom(f)\to A$  be any bijection such that $h(z)=x$ and let $g=f\circ h^{-1}$. Notice that $g:A\to\im(f)$ and $g(x)=y$, therefore $h\in v(z,x)$, $g\in v(x,y)$ and $f=g\circ h$.

\medskip

2. Suppose $w\neq x$. Let $f\in v(x,y)\circ v(z,w)$, then $f=g\circ h$, with $h\in v(z,w)$ and $g\in v(x,y)$. We consider two cases:

(a) Suppose $w\in \dom(g)$.  Then $g(w)\not=y$, since $w\not=x$. Notice that $f(z)=g(h(z))=g(w)\not=y$, therefore $f\in I(X)\setminus v(z,y)$.

(b) Suppose $w\not\in \dom(g)$.  We have that $h(z)=w\not\in \dom(g)$, therefore $z\not\in \dom(g\circ h)=\dom(f)$. Thus $f\in I(X)\setminus v(z,y)$.

Now let's see that $I(X)\setminus v(z,y)\subseteq v(x,y)\circ v(z,w) $. Let $f\in I(X)\setminus v(z,y)$. We consider four cases. 

\begin{enumerate}[(a)]
\item Suppose  $z\not\in \dom(f)$ and $y\in \im(f)$. Let $A\subseteq X$ be such that $w,x\in A$  and $|A|=|\dom(f)\cup \{z\}|$. Let  $h:\dom(f)\cup\{z\}\to A$  be any bijection such that $h(z)=w$, $h(f^{-1}(y))=x$ and let $g=f\circ h^{-1}$. Since $h(f^{-1}(y))=x$ we have that $g(x)=f(h^{-1}(x))=y$, therefore $h\in v(z,x)$, $g\in v(x,y)$ and $f=g\circ h$. 
    
\item Suppose $z\in \dom(f)$ and $y\not\in \im(f)$. Let $A\subseteq X$ be such that $w\in A$, $x\not\in A$  and $|A|=|\dom(f)|$. Let  $h:\dom(f)\to A$  be any bijection such that $h(z)=w$, and let $g:A\cup \{x\}\to \im(f)\cup\{y\}$ be such that $g(x)=y$ and  $g=f\circ h^{-1}$ in $A$. Notice that $h\in v(z,x)$, $g\in v(x,y)$ and $f=g\circ h$.
    
\item Suppose $z\in \dom(f)$ and $y\in \im(f)$.  Let $A\subseteq X$ be such that $w,x\in A$  and $|A|=|\dom(f)|$. Let  $h:\dom(f)\to A$  be any bijection such that $h(z)=w$, $h(f^{-1}(y))=x$ and let $g=f\circ h^{-1}$. Notice that $g:\im(f)\to A$ and $g(x)=y$, therefore $h\in v(z,x)$, $g\in v(x,y)$ and $f=g\circ h$.
    
\item Suppose $z\not\in \dom(f)$ and $y\not\in \im(f)$.  Let $A\subseteq X$ be such that $w\in A$, $x\not\in A$  and $|A|=|\dom(f)\cup \{z\}|$. Let  $h:\dom(f)\cup \{z\}\to A$  be any bijection such that $h(z)=w$, and let $g:A\cup \{x\}\to \im(f)\cup\{y\}$ be such that $g(x)=y$ and  $g=f\circ h^{-1}$ in $A$. Notice that $h\in v(z,x)$, $g\in v(x,y)$ and $f=g\circ h$.
\end{enumerate}

\medskip

3. It's obvious that $w_1(x)\circ w_1(y)\subseteq w_1(y)$. For the other inclusion, let $f\in w_1(y)$ and pick $A\subseteq X$ such that $x\not\in A$ and $|A|=|\dom(f)|$. Let  $h:\dom(f)\to A$  be any bijection  and let $g=f\circ h^{-1}$.  Notice that $g:\im(f)\to A$, and therefore $h\in w_1(y)$, $g\in w_1(x)$ and $f=g\circ h$. 

\medskip

4.  It follows from part 3., as  $[w_1(y)\circ w_1(x)]^{-1}=w_2(x)\circ w_2(y)$.

\medskip

5. It's easy to see that $w_2(x)\circ w_1(y)\subseteq w_2(x)\cap w_1(y)$. Let $f\in w_2(x)\cap w_1(y)$. Pick $A\subseteq X$  such that $|A|=|\dom(f)|$. Let  $h:\dom(f)\to A$  be any bijection  and let $g=f\circ h^{-1}$. Then  $h\in w_1(y)$, $g\in w_2(x)$ and $f=g\circ h$.

\medskip

6. Let $f\in I(X)$. Pick $A\subseteq X$ such that $x,y\not\in A$ and $|A|=|\dom(f)|$. Let  $h:\dom(f)\to A$  be any bijection  and let $g=f\circ h^{-1}$. Then  $h\in w_1(y)$, $g\in w_2(x)$ and $f=g\circ h$.

\medskip

7. It's easy to see that $w_1(y)\circ  v(x,y)\subseteq w_1(x)$.  Let $f\in w_1(x)$. Pick $A\subseteq X$ such that $y\not\in A$ and $|A|=|\dom(f)|$. Let  $h:\dom(f)\cup \{x\}\to A\cup\{y\}$  be any bijection such that $h(x)=y$, and let $g=f\circ h^{-1}$. Then $h\in v(x,y)$, $g\in w_1(y)$  and $f=h\circ g$.

\medskip

8. Let $f\in I(X)$. We have two cases.

(a) Suppose that $x\in \dom(f)$. Pick $A\subseteq X$ such that $y\in A$, $z\not\in A$  and $|A|=|\dom(f)|$. Let  $h:\dom(f)\to A$  be any bijection such that $h(x)=y$, and let $g=f\circ h^{-1}$. Then $h\in v(x,y)$, $g\in w_1(z)$  and $f=h\circ g$.

 (b) Suppose that $x\not\in \dom(f)$. Pick $A\subseteq X$  such that $y,z\not\in A$ and $|A|=|\dom(f)|$. Let  $h:\dom(f)\{x\}\to A\{y\}$  be any bijection such that $h(x)=y$, and let $g=f\circ h^{-1}$.  Then $h\in v(x,y)$, $g\in w_1(z)$  and $f=h\circ g$.
 
 \medskip
 
 9. Let us  see that $w_1(z)\subseteq v(x,y)\circ w_1(z)$. Let $f\in w_{1}(z)$ and suppose that $y\in \im(f)$. Let $A\subseteq X$ be such that $x\in A$ and $|A|=|\dom(f)|$. Let  $g:\dom(f)\to A$  be any bijection  such that $g(f^{-1}(y))=x$ and let $h=f\circ g^{-1}$. Then $f=h\circ g$, $h\in v(x,y)$.

Now, suppose that $f\in w_1(z)$ and $y\not \in \im(f)$. Let $A\subseteq X$ be  such that $x\not\in A$ and $|A|=|\dom(f)|$. Let $g:\dom(f)\to A$ be any bijective function. Let $h:A\cup \{x\}\to \im(f)\cup \{y\}$ be such that $h(x)=y$ and $(h\circ g)(v)=f(v)$ for $v\in \dom(f)$. We have that $h\in v(x,y)$, $g\in w_{1}(z)$ and $f=h\circ g$, that is, $f\in v(x,y)\circ w_1(z)$.

\medskip

Finally, $10.$ and $11.$ are evident. 

\end{proof}

\begin{lema}
\label{intersection}
Let  $A,B,C \subseteq I(X)$, then $A\circ(B\cap C)\subseteq (A\circ B)\cap (A\circ C)$.
\end{lema}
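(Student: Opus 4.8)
The plan is to prove the inclusion directly from the definition of the setwise (complex) product, using only that an intersection is contained in each of its factors together with the monotonicity of composition in its second argument.

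First I would recall that for subsets $P, Q \subseteq I(X)$ the product is $P \circ Q = \{p \circ q \; : \; p \in P,\ q \in Q\}$, as used throughout Lemma \ref{oper-abierta}. The key elementary observation is that this operation is monotone in each coordinate: if $Q' \subseteq Q$, then $P \circ Q' \subseteq P \circ Q$, since every element $p \circ q'$ with $q' \in Q'$ is already of the form $p \circ q$ with $q \in Q$. Applying this with $B \cap C \subseteq B$ and $B \cap C \subseteq C$ gives $A \circ (B \cap C) \subseteq A \circ B$ and $A \circ (B \cap C) \subseteq A \circ C$ simultaneously, whence $A \circ (B \cap C) \subseteq (A \circ B) \cap (A \circ C)$. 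Equivalently, and perhaps more transparently for the reader, I would argue elementwise: if $f \in A \circ (B \cap C)$, then $f = a \circ d$ for some $a \in A$ and $d \in B \cap C$; since the single witness $d$ lies in both $B$ and $C$, the same factorization shows $f \in A \circ B$ and $f \in A \circ C$, so $f \in (A \circ B) \cap (A \circ C)$.

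There is no genuine obstacle here; the content is purely set-theoretic and does not use the specific structure of $I(X)$. The only point worth flagging is why merely the inclusion $\subseteq$, rather than equality, is asserted. The reverse containment can fail because an element $f$ of $(A \circ B) \cap (A \circ C)$ may admit a factorization $f = a_1 \circ b$ with $b \in B$ and a \emph{different} factorization $f = a_2 \circ c$ with $c \in C$, where $a_1 \neq a_2$ and neither $b \in C$ nor $c \in B$; in that case $f$ need not factor through $B \cap C$. This asymmetry is exactly what makes $\circ$ distribute over intersection only in one direction, so the statement is stated as a one-sided inclusion.
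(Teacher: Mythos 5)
Your proof is correct, and it is essentially the same elementary set-theoretic argument the paper has in mind when it dismisses this lemma as ``Straightforward'': any element $a \circ d$ with $d \in B \cap C$ witnesses membership in both $A \circ B$ and $A \circ C$. Your closing remark on why the reverse inclusion can fail is a nice addition but not needed for the statement.
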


\proof Straightforward. \qed

\medskip

Now we can show that $r_f$ is an open map. 

\begin{thm}\label{right-open}
Let $f\in I(X)$. Then, $r_f:I(X)\to R_f$ is an open map  where $R_f$ is endowed with the relative topology to $\tau_{pp}$
\end{thm}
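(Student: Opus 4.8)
The plan is to first understand the target space $R_f$ and the fibers of $r_f$, then to compute the image of every subbasic open set, and finally to handle finite intersections.

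First I would identify $R_f=\{h\in I(X):\dom(h)\subseteq\dom(f)\}$: one always has $\dom(g\circ f)\subseteq\dom(f)$, and conversely any $h$ with $\dom(h)\subseteq\dom(f)$ is realized by the partial bijection $g$ determined by $g(f(t))=h(t)$ for $t\in\dom(h)$. More importantly, I would record the structure of a fiber $r_f^{-1}(h)$: a preimage $g$ is forced on $\im(f)$ (it must satisfy $\dom(g)\cap\im(f)=f(\dom(h))$ and $g(f(t))=h(t)$), while on $X\setminus\im(f)$ it may be an arbitrary partial injection whose image avoids $\im(h)$. This ``free part'' is the slack that drives the whole argument. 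Since $\tau_{pp}$ has the subbasis $\{v(x,y),w_1(x),w_2(y)\}$, it suffices to prove that $r_f(U)=U\circ f$ is relatively open in $R_f$ for every basic open set $U=\bigcap_i v(x_i,y_i)\cap\bigcap_j w_1(u_j)\cap\bigcap_k w_2(z_k)$.

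Next, in the spirit of Lemma \ref{oper-abierta}, I would compute $V\circ f$ for each subbasic $V$ by the same explicit preimage constructions. This should give $v(x,y)\circ f=v(f^{-1}(x),y)\cap R_f$ when $x\in\im(f)$ and $v(x,y)\circ f=w_2(y)\cap R_f$ when $x\notin\im(f)$; $w_1(u)\circ f=w_1(f^{-1}(u))\cap R_f$ when $u\in\im(f)$ and $w_1(u)\circ f=R_f$ when $u\notin\im(f)$; and $w_2(z)\circ f=w_2(z)\cap R_f$ in all cases. In every case $V\circ f$ is relatively open in $R_f$.

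Finally I would prove that $r_f(U)=\bigcap_i(V_i\circ f)$, where $U=\bigcap_i V_i$ is the decomposition of $U$ into subbasic pieces. The inclusion $r_f(U)\subseteq\bigcap_i(V_i\circ f)$ is immediate from the definition. For the reverse inclusion, which is the heart of the matter, I would take $h\in\bigcap_i(V_i\circ f)$ and assemble a single $g\in U$ with $g\circ f=h$: define $g$ on $\im(f)$ by $g(f(t))=h(t)$ (this settles every constraint coming from coordinates in $\im(f)$, each of which holds because $h$ lies in the corresponding $V_i\circ f$), and on $X\setminus\im(f)$ set $g(x_i)=y_i$ for the remaining $v$-constraints, leaving $g$ undefined elsewhere. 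The point to check is that these free-part requirements are jointly consistent: the relevant $y_i$ are distinct, avoid $\im(h)$ (because $v(x_i,y_i)\circ f\subseteq w_2(y_i)$ for $x_i\notin\im(f)$), avoid the forbidden $z_k$, and the corresponding $x_i$ avoid the forbidden $u_j$; all of this follows from $U$ being nonempty together with $h\in\bigcap_i(V_i\circ f)$ (if $U=\emptyset$ the statement is trivial). The resulting $g$ lies in $U$ and maps to $h$, so $r_f(U)$ is a finite intersection of relatively open subsets of $R_f$, hence relatively open, and $r_f$ is open.

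The main obstacle is precisely this reverse inclusion. Because $r_f$ is far from injective, one cannot reason coordinate-by-coordinate on images, and must instead show that the separate witnesses for the various memberships $h\in V_i\circ f$ can be amalgamated into one witness lying in $U$. The slack provided by the free part $X\setminus\im(f)$ of a preimage is exactly what makes the amalgamation possible, and verifying the absence of conflicts among the finitely many free-part constraints is the only genuinely delicate step.
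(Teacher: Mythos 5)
Your proposal is correct and takes essentially the same route as the paper: your intersection $\bigcap_i (V_i\circ f)$ is exactly the paper's set $Q\cap R_f$, and your amalgamated witness (the forced map $h\circ f^{-1}$ on $\im(f)$ extended by the free assignments $x_i\mapsto y_i$ for $x_i\notin\im(f)$) is precisely the paper's extension $\widehat{g}$ of $g=h\circ f^{-1}$. The only difference is organizational: you obtain the easy inclusion $r_f(U)\subseteq\bigcap_i(V_i\circ f)$ by monotonicity together with direct computations of the subbasic images $V\circ f$, whereas the paper routes it through the composition identities of Lemma \ref{oper-abierta} and Lemma \ref{intersection}; the delicate reverse inclusion is handled identically in both.
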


\begin{proof}
Let $U$ be a non empty basic $\tau_{pp}$-open set of the form: 
$$
U=\displaystyle\bigcap_{i=1}^nv(x_i,y_i)\cap\bigcap_{i=1}^mw_1(z_i)\cap\bigcap_{i=1}^pw_2(w_i).
$$ 
We will show that $U\circ f$ is open. Consider the sets:
\[
\begin{array}{rclcrcl}
  \widehat{M}   & = & \{x_i:\;1\leq i\leq n \} & \;\;\;\;\;\;\; & M  & = & \widehat{M}\cap \im(f) \\
   \widehat{N} & = & \{z_i:\;1\leq i\leq m\} & & N & = &  \widehat{N}\cap \im(f) \\
  \widehat{O} & = & \{w_i: \;1\leq i\leq p \}  & & O & = &  \widehat{O}\cap \im(f)  
\end{array}
\]
and 
$$
Q=\displaystyle\bigcap_{x_i\in M}v(f^{-1}(x_i),y_i)\cap\bigcap_{z_i\in N}w_1(f^{-1}(z_i))\cap\bigcap_{x_i\in \widehat{M}\setminus M}w_2(y_i)\cap\bigcap_{i=1}^pw_2(w_i).
$$
We claim that $U\circ f=Q\cap R_f$.
First, we show that $U\circ f\subseteq Q$. Let

\[
\begin{array}{rcl}
R     & = &  \displaystyle\bigcap_{x_i\in M}v(f^{-1}(x_i),x_i) \cap \bigcap_{z_i\in N}v(f^{-1}(z_i),z_i)\cap \bigcap_{w_i\in O}v(f^{-1}(w_i),w_i)\\
\\
S & = & \displaystyle\bigcap_{x_i\in \widehat{M}\setminus M}w_2(x_i)\cap \bigcap_{x_i\in \widehat{N}\setminus N}w_2(z_i)\cap \bigcap_{x_i\in \widehat{O}\setminus O}w_2(w_i).
\end{array}
\]
Then $f\in R\cap S$.  
By Lemma \ref{oper-abierta}, we have the following:
\medskip

\begin{itemize}
\item $v(x_i,y_i)\circ v(f^{-1}(x_i),x_i)=v(f^{-1}(x_i),y_i)$, for all $x_i\in M$.
    
\item $w_1(z_i)\circ v(f^{-1}(z_i),z_i)=w_1(f^{-1}(z_i))$, for all $z_i\in N$.
    
\item $w_2(w_i)\circ v(f^{-1}(w_i),w_i)=w_2(w_i)$, for all $w_i\in O$.  
    
\item $v(x_i,y_i)\circ w_2(x_i)=w_2(y_i)$, for all $x_i\in \widehat{M}\setminus M$.
    
\item $w_1(z_i)\circ w_2(z_i)=I(X)$, for all $z_i\in \widehat{N}\setminus N$.
    
\item $w_1(w_i)\circ w_2(w_i)=w_2(w_i)$, for all $w_i\in \widehat{O}\setminus O$.
\end{itemize}

\medskip

Therefore,  using Lemma \ref{intersection} and the claims above,  we easily have that 
\medskip

\[
\begin{array}{lcl}
U\circ f & \subseteq& U\circ(R\cap S)\\
\\
&\subseteq &  \displaystyle \bigcap_{x_i\in M}v(f^{-1}(x_i),y_i)\cap \bigcap_{z_i\in N}w_1(f^{-1}(z_i))\cap \bigcap_{w_i\in O}w_2(w_i)\cap \bigcap_{x_i\in \widehat{M}\setminus M}w_2(y_i)\cap \bigcap_{w_i\in \widehat{O}\setminus O}w_2(w_i)\\
     & =&  \displaystyle \bigcap_{x_i\in M}v(f^{-1}(x_i),y_i)\cap \bigcap_{x_i\in N}w_1(f^{-1}(z_i))\cap \bigcap_{i=1}^pw_2(w_i)\cap \bigcap_{x_i\in \widehat{M}\setminus M}w_2(y_i)\\
     &= & Q.
\end{array}
\]

\medskip

Now, we are going to show that $Q\cap R_f\subseteq U\circ f$.
Let $h\in Q\cap R_f$ and define $g=h\circ f^{-1}$. Notice  that $g(x_i)=y_i$, for all $x_i\in M$, since $h\in v(f^{-1}(x_i),y_i)$ for $x_i\in M$.
Notice also that that if $x_i\in \widehat{M}\setminus M$, then $x_i\not\in \dom(g)$, and $y_i\not\in \im(g)$ since $h\in w_2(y_i)$ for $x_i\in \widehat{M}\setminus M$. 
Therefore we can extend $g$ to a function $\widehat{g}$ such that $\dom(\widehat{g})$ is $\dom(g)\cup (\widehat{M}\setminus M)$  and  $g(x_i)=y_i$, for all $x_i\in \widehat{M}\setminus M$.

To finish the proof it suffices to show that 
$h=\widehat{g}\circ f$ and $\widehat{g}\in U$. 
First, we show that $\widehat{g}\in U$.
\begin{itemize}
\item $\widehat{g}(x_i)=y_i$, for all $i$, by construction. 
\item 
$z_i\not\in \dom(\widehat{g})$, for all $z_i$. In fact, if $z_i\in \widehat{N}\setminus N$, then $z_i\not\in \im(f)$, therefore $z_i\not\in \dom({g})$ and thus $z_i\not\in \dom(\widehat{g})$. On the other hand, if $z_i\in N$, then  $h\in w_1(f^{-1}(z_i))$ and 
$z_i\not\in \dom(\widehat{g})$.
\item As $h\in w_2(w_i)$ for all $w_i$, we have that $w_i\not\in \im(\widehat{g})$, for all $w_i$.   
\end{itemize}
Thus $\widehat{g}\in U$. Finally, since $g=h\circ f^{-1}$ and $\dom(\widehat{g})\cap \im(f)=\dom(g)$, 
$h=\widehat{g}\circ f$. 

\end{proof}

\begin{thm}
\label{left-open}
Let $f\in I(X)$ and consider $L_f=\{f\circ g\text{ }|\text{ }g\in I(X)\}$. Then, $\circ_f:I(X)\to L_f$, such that $\circ_f(g)=f\circ g$, is an open function, where $L_f$ is endowed with the relative topology to $\tau_{pp}$.
\end{thm}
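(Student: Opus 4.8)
The plan is to deduce this theorem from Theorem \ref{right-open} by exploiting the inversion map, which is a homeomorphism of $(I(X),\tau_{pp})$. The algebraic engine is the identity $(f\circ g)^{-1}=g^{-1}\circ f^{-1}$, which converts left multiplication by $f$ into right multiplication by $f^{-1}$ after conjugating by inversion. Concretely, writing $i(h)=h^{-1}$, I compute for every $g\in I(X)$ that
\[
i\bigl(\circ_f(g)\bigr)=(f\circ g)^{-1}=g^{-1}\circ f^{-1}=r_{f^{-1}}\bigl(i(g)\bigr),
\]
so that $i\circ \circ_f = r_{f^{-1}}\circ i$, and since $i$ is an involution this rearranges to the factorization $\circ_f = i\circ r_{f^{-1}}\circ i$.

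First I would record that $i:(I(X),\tau_{pp})\to (I(X),\tau_{pp})$ is a homeomorphism: by Theorem \ref{topI(X)}(iv) it is continuous, and since $i\circ i=\mathrm{id}$ it is its own inverse, hence a homeomorphism. Next I would identify the images of $i$ on the relevant sets. Since
\[
i(R_{f^{-1}})=\{(g\circ f^{-1})^{-1}:\;g\in I(X)\}=\{f\circ g^{-1}:\;g\in I(X)\}=L_f,
\]
the restriction $i\!\upharpoonright_{R_{f^{-1}}}:R_{f^{-1}}\to L_f$ is a homeomorphism for the subspace topologies inherited from $\tau_{pp}$ (a global homeomorphism restricts to a homeomorphism between any subset and its image). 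This is the only genuine bookkeeping step, and it is routine.

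Finally I would chase an arbitrary $\tau_{pp}$-open set $U\subseteq I(X)$ through the three maps in the factorization. As $i$ is a homeomorphism, $i(U)$ is open in $I(X)$; by Theorem \ref{right-open}, $r_{f^{-1}}(i(U))$ is open in $R_{f^{-1}}$; and applying the homeomorphism $i\!\upharpoonright_{R_{f^{-1}}}:R_{f^{-1}}\to L_f$ sends this to an open subset of $L_f$. Since this final image equals $\circ_f(U)$ by the identity $\circ_f=i\circ r_{f^{-1}}\circ i$, I conclude that $\circ_f$ is an open map onto $L_f$. I do not anticipate a real obstacle here: all the hard combinatorial work on left/right translates and the identities of Lemma \ref{oper-abierta} has already been absorbed into Theorem \ref{right-open}, so the whole argument is a short duality via inversion. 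The one point requiring care is keeping the domains and codomains straight in the composition $i\circ r_{f^{-1}}\circ i$, in particular that the outer $i$ is used as the restricted homeomorphism $R_{f^{-1}}\to L_f$ rather than as the global map.
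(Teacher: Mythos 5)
Your proof is correct, but it takes a genuinely different route from the paper. The paper's proof of Theorem \ref{left-open} consists of the single remark that one argues ``similar to the proof of Theorem \ref{right-open}'': the intended argument is to repeat the combinatorial construction, building the left-handed analogue of the set $Q$ out of the subbasic sets $v(x,y)$, $w_1(z)$, $w_2(w)$ via the translation identities of Lemma \ref{oper-abierta}, and verifying $f\circ U = Q\cap L_f$ by an explicit extension argument. You instead deduce the left-handed statement from the right-handed one by the duality $\circ_f = i\circ r_{f^{-1}}\circ i$, using that inversion $i$ is a $\tau_{pp}$-homeomorphism (continuity is Theorem \ref{topI(X)}(iv), and $i\circ i=\mathrm{id}$), that $i(R_{f^{-1}})=L_f$, and that a global homeomorphism restricts to a homeomorphism $i|_{R_{f^{-1}}}:R_{f^{-1}}\to L_f$ of subspaces. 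All steps check out: the identity $(f\circ g)^{-1}=g^{-1}\circ f^{-1}$ gives the factorization, and openness passes correctly through the chain $U\mapsto i(U)\mapsto r_{f^{-1}}(i(U))\mapsto \circ_f(U)$. What your approach buys is economy and rigor: none of the case analysis is repeated, and it makes transparent that left/right statements about $I(X)$ are interchangeable under inversion whenever the topology is an inverse semigroup topology --- the same mechanism the paper uses locally in items 10 and 11 of Lemma \ref{oper-abierta}, applied here globally. What the paper's implicit approach buys is an explicit description of $f\circ U$ as the trace on $L_f$ of a concrete basic open set; your argument does not directly produce such a formula (though one could be extracted by pushing the explicit $Q$ for $r_{f^{-1}}$ through $i$), and explicit formulas of this kind are what would be needed for the authors' conjecture that $\circ$ itself is an open map.
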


\begin{proof}
Similar the proof of Theorem  \ref{right-open}.
\end{proof}

\medskip

We conjecture that $\circ$ is an open map. To show it would  require  analyzing many cases in order to  extend Lemma \ref{oper-abierta}. We did not pursue it further, since we do not need this result for this paper.

\section{Uniqueness of $\tau_{pp}$}

In this section we present some results showing that $\tau_{pp}$ is unique in some sense. 
For that end we first show that $(I(X),\tau_{pp})$ is  a quotient of a symmetric group $S_\infty(Y)$ (for some $Y$) with the usual product topology.

Let $X\subseteq Y$.   For each $f\in S_\infty(Y)$ we define $\widehat{f}\in I(X)$ as follows:
\[
\widehat{f}=\{(x,f(x)):\; x\in X \;\text{and}\; f(x)\in X\}.
\]
Thus $\widehat{f}$ is the restriction of $f$ to $f^{-1}(X)\cap X$.
Let $\pi:S_\infty(Y)\to I(X)$  be given by $\pi(f)=\widehat{f}$.

\begin{prop}
\label{cociente}
Let $X$ be an infinite set and $X\subseteq Y$. Then $\pi$ is onto if, and only if, $|X|\leq |Y\setminus X|$. 
\end{prop}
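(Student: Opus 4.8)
The plan is to prove the two implications separately, disposing of necessity by a single well-chosen test element and of sufficiency by an explicit construction governed by infinite cardinal arithmetic.

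For necessity, suppose $\pi$ is onto. I would test surjectivity against the empty partial bijection $1_\emptyset\in I(X)$. Since $\pi$ is onto, there is $f\in S_\infty(Y)$ with $\widehat f=1_\emptyset$, i.e. $\dom(\widehat f)=\emptyset$. By the definition of $\widehat f$ this says that no $x\in X$ satisfies $f(x)\in X$; equivalently $f(X)\subseteq Y\setminus X$. As $f$ is a bijection of $Y$ it is in particular injective, so $|X|=|f(X)|\le |Y\setminus X|$. This direction uses nothing about $X$ being infinite.

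For sufficiency, assume $|X|\le |Y\setminus X|$ and fix an arbitrary $g\in I(X)$ with $A=\dom(g)$ and $B=\im(g)$; I want $f\in S_\infty(Y)$ with $\widehat f=g$. The idea is to force exactly the elements of $A$ into the domain of $\widehat f$: set $f|_A=g$ (so $A\to B$ bijectively), send $X\setminus A$ into $Y\setminus X$, and then fill in $f$ on $Y\setminus X$ so as to obtain a bijection of $Y$. Writing $\kappa=|Y\setminus X|$, note that $X$ infinite together with the hypothesis gives $\kappa\ge |X|\ge\aleph_0$, so $\kappa$ is infinite and $\kappa+\kappa=\kappa$. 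Split $Y\setminus X=U_1\sqcup U_2$ with $|U_1|=|U_2|=\kappa$, map $X\setminus A$ injectively into $U_1$ (possible since $|X\setminus A|\le |X|\le\kappa$), and observe that the still-uncovered part of the codomain is $(X\setminus B)\sqcup L$, where $L=(Y\setminus X)\setminus f(X\setminus A)\supseteq U_2$ has cardinality $\kappa$; meanwhile the still-unassigned part of the domain is exactly $Y\setminus X$, of cardinality $\kappa$. Since $|X\setminus B|\le |X|\le\kappa$, both sets have cardinality $\kappa$, so I extend $f$ by any bijection between them. Piecing the three parts together yields a genuine bijection of $Y$, and checking $\widehat f=g$ is then immediate: for $x\in A$ we have $f(x)=g(x)\in X$, so $x\in\dom(\widehat f)$ and $\widehat f(x)=g(x)$, while for $x\in X\setminus A$ we have $f(x)\in Y\setminus X$, so $x\notin\dom(\widehat f)$. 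Hence $\dom(\widehat f)=A$ and $\widehat f=g$.

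The main obstacle is the cardinal bookkeeping in the sufficiency direction: one must guarantee that the partial map defined on $X$ extends to a genuine bijection of $Y$ while respecting the constraint $f(X\setminus A)\subseteq Y\setminus X$. The clean way to avoid a case analysis, which would otherwise split on whether $|X\setminus A|<\kappa$ or $|X\setminus A|=\kappa$, is to reserve a block $U_2$ of size $\kappa$ inside $Y\setminus X$ that is left untouched by the image of $X\setminus A$; this keeps both leftover sets at cardinality $\kappa$ regardless, and the infiniteness of $\kappa$ is exactly what makes this reservation possible.
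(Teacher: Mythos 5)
Your proof is correct, and your sufficiency argument is essentially the paper's: the paper also extends $g$ by choosing $C,D\subseteq Y\setminus X$ with $|C|=|X\setminus A|$, $|D|=|X\setminus B|$ and with $Y\setminus (X\cup C)$ and $Y\setminus(X\cup D)$ of full cardinality $|Y|$, then takes any bijection $f$ of $Y$ extending $g$ with $f[X\setminus A]=C$ and $f[D]=X\setminus B$; your reserved block $U_2$ plays exactly the same role as the paper's full-cardinality requirement, namely guaranteeing that the two leftover pieces have equal (infinite) cardinality and can be matched. Where you genuinely diverge is the necessity direction. The paper argues by contraposition: assuming $|X|>|Y\setminus X|$, it partitions $X=A\cup B$ into halves of cardinality $|X|$ and observes that $1_A$ has no preimage, since any $f$ with $\widehat{f}=1_A$ would inject $B$ into $Y\setminus X$, forcing $|X|=|B|\leq |Y\setminus X|$. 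You instead test surjectivity directly against $1_\emptyset$: a preimage $f$ must satisfy $f(X)\subseteq Y\setminus X$, giving $|X|\leq |Y\setminus X|$ at once. Your witness is cleaner — it avoids the contrapositive and the half-partition, and, as you note, it does not use that $X$ is infinite, whereas the paper's argument needs infiniteness of $X$ to conclude $|B|=|X|$ (infiniteness remains essential in the sufficiency direction, where both proofs rely on $\kappa+\kappa=\kappa$ for $\kappa=|Y\setminus X|$).
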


\begin{proof}
Suppose $|X|\leq |Y\setminus X|$ and let $g\in I(X)$. Let $A=\dom(g)$ and $B=\im(g)$. Let $C\subseteq Y\setminus X$ and $D\subseteq Y\setminus X$ be such that $|C|=|X\setminus A|$, $|D|=|X\setminus B|$, $|Y\setminus (X\cup C)|= |Y\setminus (X\cup D)|=|Y|$. All this conditions can be fulfilled  as $|X|\leq |Y\setminus X|$. Now take any extension of $g$ to a bijection $f: Y\to  Y$ such that $f[X\setminus A]=C$, $f[D]=X\setminus B$.  Then $\widehat{f}=g$.

Conversely, suppose $|X|> |Y\setminus X|$. Let $X=A\cup B$ be a partition of $X$ into sets of equal cardinality.  Then there is no $f\in S_\infty(Y)$ such that $\widehat{f}=1_A$. 
\end{proof}

\medskip

Let us observe that the map $\pi$ is not a semigroup  homomorphism. In fact, let $y\in Y\setminus X$, $x\in X$,  and $f\in S_\infty(Y)$ be such that $f(x)=y$,  $f(y)=x$ and $f(z)=z$ for all $z\not\in \{x,y\}$. Then $f\circ f=1_Y$, $\pi(1_Y)=1_X$ but $\pi(f)\circ \pi(f)=1_{X\setminus\{x\}}$.  In general, $\pi(f)\circ\pi(g)\subseteq \pi(f\circ g)$. 

\medskip

\begin{prop}
\label{cociente2}
Let $X\subseteq Y$ with $|X|\leq |Y\setminus X|$. The map $\pi: S_\infty(Y)\to (I(X),\tau_{pp})$ is continuous, onto  and open. 
\end{prop}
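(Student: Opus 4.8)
The plan is to establish the three properties in turn, the first two being short and the last carrying the real content. Surjectivity is immediate: the hypothesis $|X|\le|Y\setminus X|$ is exactly the condition of Proposition \ref{cociente}, so $\pi$ is onto. For continuity, recall that $\{v(x,y),w_1(x),w_2(y):x,y\in X\}$ is a subbasis for $\tau_{pp}$, so it suffices to show that the $\pi$-preimage of each subbasic set is open in $S_\infty(Y)$, whose own subbasic opens are the sets $\{f:f(a)=b\}$ with $a,b\in Y$.

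Unwinding the definition of $\widehat{f}$, I would compute
\[
\pi^{-1}(v(x,y))=\{f\in S_\infty(Y):f(x)=y\},
\]
\[
\pi^{-1}(w_1(x))=\{f:f(x)\notin X\}=\bigcup_{y\in Y\setminus X}\{f:f(x)=y\},
\]
\[
\pi^{-1}(w_2(y))=\{f:f^{-1}(y)\notin X\}=\bigcup_{z\in Y\setminus X}\{f:f(z)=y\},
\]
where for the last line I use $f^{-1}(y)=z\iff f(z)=y$. Each right-hand side is a union of subbasic opens of $S_\infty(Y)$, hence open, so $\pi$ is continuous.

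For openness it is enough to show that $\pi$ sends each basic open set to a $\tau_{pp}$-open set, since images of unions are unions of images. A basic open set of $S_\infty(Y)$ has the form $U=\{f\in S_\infty(Y):f(a_i)=b_i,\ 1\le i\le n\}$ with the $a_i$ distinct and the $b_i$ distinct. Sorting the indices according to whether $a_i$ and $b_i$ lie in $X$, I would claim
\[
\pi[U]=\bigcap_{a_i,b_i\in X}v(a_i,b_i)\ \cap\!\!\bigcap_{a_i\in X,\,b_i\notin X}\!\!w_1(a_i)\ \cap\!\!\bigcap_{a_i\notin X,\,b_i\in X}\!\!w_2(b_i),
\]
the indices with $a_i,b_i\notin X$ imposing no condition on $\widehat{f}$. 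The inclusion $\subseteq$ is a direct check from the definition of $\widehat f$: if $f\in U$, then $a_i,b_i\in X$ forces $\widehat{f}(a_i)=b_i$; $a_i\in X,b_i\notin X$ forces $a_i\notin\dom(\widehat{f})$; and $a_i\notin X,b_i\in X$ forces $b_i\notin\im(\widehat{f})$.

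The reverse inclusion is the main obstacle and is where the cardinality hypothesis enters. Given $g$ in the right-hand intersection, I must produce $f\in U$ with $\widehat{f}=g$; equivalently, I must extend $g$ to a bijection $f$ of $Y$ with $f(a_i)=b_i$ for all $i$, with $f|_{\dom(g)}=g$, and with $f[X\setminus\dom(g)]\subseteq Y\setminus X$ (this last requirement automatically forces $\im(\widehat{f})=\im(g)$, so no separate image bookkeeping is needed). The membership conditions on $g$ guarantee that the finitely many prescribed values $f(a_i)=b_i$ are mutually consistent and compatible with these requirements: $v(a_i,b_i)$ makes $f(a_i)=g(a_i)=b_i$ automatic, $w_1(a_i)$ ensures $a_i\notin\dom(g)$ so that $f(a_i)=b_i\in Y\setminus X$ is allowed, and $w_2(b_i)$ ensures $b_i\notin\im(g)$ so that routing $a_i\notin X$ to $b_i$ creates no collision. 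The remaining elements of $X\setminus\dom(g)$ must then be injected into $Y\setminus X$ and the whole partial map completed to a bijection of $Y$. Exactly as in the proof of Proposition \ref{cociente}, the condition $|X|\le|Y\setminus X|$ provides enough room in $Y\setminus X$ to carry out this extension, the finitely many extra point constraints not affecting the cardinality count. This yields $f\in U$ with $\widehat{f}=g$, completing the reverse inclusion and hence the openness of $\pi$.
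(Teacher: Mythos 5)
Your proof is correct and takes essentially the same approach as the paper: the same three subbasic preimage identities establish continuity, surjectivity is quoted from Proposition \ref{cociente}, and openness rests on the same formula for $\pi[U]$ obtained by sorting the constraints $f(a_i)=b_i$ according to whether $a_i,b_i$ lie in $X$, with the reverse inclusion handled by the extension construction from Proposition \ref{cociente}. If anything, your verification of the reverse inclusion (checking consistency of the prescribed values with $f|_{\dom(g)}=g$ and $f[X\setminus\dom(g)]\subseteq Y\setminus X$) is more explicit than the paper's, which merely cites the analogous construction.
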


\begin{proof}
First, we will show that $\pi$ is continuous. Let $u(x,y)$  denote the subbasic open set of $S_\infty(Y)$ given by $\{f\in S_\infty(Y):\; f(x)=y\}$ for $x, y\in Y$. The continuity of $\pi$ follows from the following identities. 
\begin{itemize}
\item[(i)] $\pi^{-1}(v(x,y))=u(x,y)$ for all $x, y\in X$. 

\item[(ii)] $\pi^{-1}(w_1(x))=\bigcup_{y\in Y\setminus X} u(x,y)$ for $x\in X$.

\item[(iii)] $\pi^{-1}(w_2(y))=\bigcup_{x\in Y\setminus X} u(x,y)$ for $y\in X$.
\end{itemize}

To see that $\pi$ is open, let $x_i,y_i$ in $Y$ for $1\leq i\leq n$.
Let $1\leq k_1\leq k_2\leq k_3\leq n$ be such that  (1) $x_i,y_i\in X$ for $1\leq i\leq k_1$, (2)   $x_j\in X$ and $y_j\not\in X$ for $k_1<j\leq k_2$, (3)  $x_j\not\in X$ and $y_j\in X$ for $k_2<j\leq k_3$ and (4) $x_j\not\in X$ and $y_j\not\in X$ for $k_3<j\leq n$. Then 
\[
\pi( \bigcap_{i=1}^n u(x_i,y_i))=\bigcap_{i=1}^{k_1} v(x_i, y_i)\cap \bigcap_{j=k_1+ 1}^{k_2}w_1(x_j) \cap \bigcap_{j=k_2+ 1}^{k_3}w_2(y_j).
\]
The direction $\subseteq$ is straightforward. For  $\supseteq$ we use an analogous construction as in the proof that  $\pi$ is onto (Proposition \ref{cociente}).

\end{proof}

\begin{thm}
Let $X\subseteq Y$ with $|Y\setminus X|\geq |X|$. $\tau_{pp}$ is the only inverse semigroup Hausdorff topology on $I(X)$  with respect to which $\pi:S_\infty(Y)\to I(X)$ is continuous. 
\end{thm}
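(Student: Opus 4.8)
**The plan is to prove uniqueness by showing that any inverse semigroup Hausdorff topology $\sigma$ making $\pi$ continuous must coincide with $\tau_{pp}$.** The inclusion $\tau_{pp}\subseteq\sigma$ is already available: by Theorem~\ref{tppminima}(iii), every inverse semigroup Hausdorff topology on $I(X)$ contains $\tau_{pp}$. So the entire content lies in the reverse inclusion $\sigma\subseteq\tau_{pp}$. The strategy is to exploit that $\pi:S_\infty(Y)\to(I(X),\tau_{pp})$ is a \emph{quotient} map, so that any topology $\sigma$ for which $\pi$ is continuous is automatically coarser than the quotient topology, which I will identify with $\tau_{pp}$.

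\textbf{First I would establish that $\pi$ is a topological quotient map onto $(I(X),\tau_{pp})$.} By Proposition~\ref{cociente2}, $\pi$ is continuous, onto, and open; a continuous open surjection is always a quotient map. This means $\tau_{pp}$ is exactly the finest topology on $I(X)$ making $\pi$ continuous: a set $U\subseteq I(X)$ is $\tau_{pp}$-open if and only if $\pi^{-1}(U)$ is open in $S_\infty(Y)$. Now suppose $\sigma$ is any inverse semigroup Hausdorff topology with $\pi:(S_\infty(Y))\to(I(X),\sigma)$ continuous. Take any $\sigma$-open set $U$. By continuity of $\pi$ into $(I(X),\sigma)$, the preimage $\pi^{-1}(U)$ is open in $S_\infty(Y)$. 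By the quotient characterization just stated, this forces $U$ to be $\tau_{pp}$-open. Hence $\sigma\subseteq\tau_{pp}$.

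\textbf{Combining the two inclusions gives $\sigma=\tau_{pp}$.} Since $\tau_{pp}$ itself is an inverse semigroup Hausdorff topology for which $\pi$ is continuous (Theorem~\ref{topI(X)}(iv) and Proposition~\ref{cociente2}), it witnesses that at least one such topology exists, and the argument above shows it is the only one. I would write this out cleanly: let $\sigma$ be any candidate, derive $\tau_{pp}\subseteq\sigma$ from Theorem~\ref{tppminima}(iii) and $\sigma\subseteq\tau_{pp}$ from the quotient argument, and conclude equality.

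\textbf{The main obstacle, and the place to be careful, is the quotient argument itself — specifically that ``continuous open surjection implies quotient map'' is being applied correctly and that it really yields $\sigma\subseteq\tau_{pp}$ rather than the trivial direction.} The subtlety is that the hypothesis gives continuity of $\pi$ into $(I(X),\sigma)$, and one must turn this into an \emph{openness} statement about $\sigma$-sets in terms of $\tau_{pp}$. The clean way is to note that for a quotient map onto $(I(X),\tau_{pp})$, any map $g:(I(X),\tau_{pp})\to Z$ is continuous iff $g\circ\pi$ is; applying this with $g=\mathrm{id}:(I(X),\tau_{pp})\to(I(X),\sigma)$ and using that $\mathrm{id}\circ\pi=\pi$ is $\sigma$-continuous shows the identity map is continuous, i.e. $\sigma\subseteq\tau_{pp}$. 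It is precisely here that the Hausdorffness and inverse-semigroup hypotheses on $\sigma$ do their work, via Theorem~\ref{tppminima}(iii), to secure the opposite inclusion and pin down the topology exactly.
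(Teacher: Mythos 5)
Your proposal is correct and takes essentially the same approach as the paper: the paper also obtains $\tau_{pp}\subseteq\tau$ from the minimality result (Theorem \ref{tppminima}(iii)) and the reverse inclusion $\tau\subseteq\tau_{pp}$ from Proposition \ref{cociente2}, the point being exactly your observation that a continuous open surjection onto $(I(X),\tau_{pp})$ is a quotient map, so any topology making $\pi$ continuous is coarser than $\tau_{pp}$ (for $\tau$-open $U$, $\pi^{-1}(U)$ is open and $U=\pi(\pi^{-1}(U))$ is $\tau_{pp}$-open). The only slight imprecision is your closing remark that the Hausdorff and inverse-semigroup hypotheses act ``precisely here'' in the quotient step; in fact they are needed only for the other inclusion $\tau_{pp}\subseteq\tau$, as you correctly use them earlier.
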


\begin{proof}
Let $\tau$ be an inverse semigroup Hausdorff  topology on $I(X)$ such that $\pi$ is continuous. Then by Theorem \ref{tppminima} we have that $\tau_{pp}\subseteq \tau$. Since $\pi$ is continuous,  by Proposition \ref{cociente2} we conclude that   $\tau\subseteq \tau_{pp}$.
\end{proof}

We now present a proof that $\tau_{pp}$ is the unique inverse semigroup Polish topology on $I(\N)$ (satisfying some addional conditions). Our approach is different than the one used in \cite{elliott2020}.  We need two auxiliary results.

\bigskip

\begin{lema}
\label{casi-convergencia}
Let $R_A=\{f\circ 1_A:\; f\in I(X)\}$  and $r_A:I(X)\to R_A$, $r_A(f)=  f\circ 1_A$. 
Let $\tau$ be an inverse semigroup Hausdorff topology on $I(X)$. Suppose $r_A$ is a $\tau$-open map for every $A\subseteq X$ cofinite. Let $(f_k)_k$ be a sequence on $I(X)$ such that 

\begin{itemize}
    \item[(i)] $f_k\stackrel{\tau_{pp}}{\longrightarrow} f$.
    \item[(ii)] There is a cofinite set $A\subseteq X$ such that $f_k\circ 1_A\stackrel{\tau}{\longrightarrow}f\circ 1_A$.
\end{itemize}
Then $f_k\stackrel{\tau}{\longrightarrow}f$.
\end{lema}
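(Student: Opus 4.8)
The plan is to argue by contradiction, exploiting the hypothesis that $r_A$ is $\tau$-open together with the inclusion $\tau_{pp}\subseteq\tau$ from Theorem \ref{tppminima}. Write $F=X\setminus A$, which is finite. The essential difficulty is that $r_A$ is far from injective: two partial bijections agreeing on $A$ but differing on $F$ have the same image under $r_A$, so openness of $r_A$ alone only produces, for each $k$, \emph{some} preimage of $f_k\circ 1_A$ lying in a prescribed open set, with no guarantee that this preimage equals $f_k$. Overcoming this is the crux, and I would resolve it by first shrinking the target neighbourhood so as to pin down the behaviour on the finite set $F$.

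First I would record the behaviour on $F$ coming from (i). Since $f_k\stackrel{\tau_{pp}}\to f$, we have $f_k\stackrel{\tau_1}\to f$ (by Theorem \ref{conv2}(ii), as $\tau_1\subseteq\tau_{pp}$), and because $F$ is finite, Theorem \ref{conv} applied at each $p\in F$ gives an $N$ such that for all $k\geq N$ the function $f_k$ agrees with $f$ on $F$; that is, $p\in\dom(f_k)$ with $f_k(p)=f(p)$ when $p\in F\cap\dom(f)$, and $p\notin\dom(f_k)$ when $p\in F\setminus\dom(f)$. Next I would fix a $\tau$-open ``fibre-selecting'' set: for each $p\in F$ let $C_p=v(p,f(p))$ if $p\in\dom(f)$ and $C_p=w_1(p)$ if $p\notin\dom(f)$, and put $C=\bigcap_{p\in F}C_p$. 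Each $C_p$ is a subbasic element of $\tau_{pp}$, hence $\tau$-open since $\tau_{pp}\subseteq\tau$, so $C$ is $\tau$-open and $f\in C$.

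Now suppose, toward a contradiction, that $f_k\not\stackrel{\tau}\to f$. Then there is a $\tau$-open $U\ni f$ and a subsequence $(f_{k_j})_j$ with $f_{k_j}\notin U$ for all $j$. Put $U'=U\cap C$, a $\tau$-open neighbourhood of $f$. Since $r_A$ is $\tau$-open, $r_A(U')=U'\circ 1_A$ is open in $R_A$ for the relative topology and contains $f\circ 1_A=r_A(f)$. By hypothesis (ii), $f_{k_j}\circ 1_A\stackrel{\tau}\to f\circ 1_A$ inside $R_A$, so for all large $j$ we have $f_{k_j}\circ 1_A\in r_A(U')$; choose $u_j\in U'$ with $u_j\circ 1_A=f_{k_j}\circ 1_A$. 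The final step is to verify $u_j=f_{k_j}$: the equality $u_j\circ 1_A=f_{k_j}\circ 1_A$ means $u_j$ and $f_{k_j}$ agree on $A$, while $u_j\in C$ and, for $k_j\geq N$, $f_{k_j}$ both agree with $f$ on $F$; since $X=A\cup F$ this forces $u_j=f_{k_j}$. Hence $f_{k_j}=u_j\in U'\subseteq U$, contradicting $f_{k_j}\notin U$. Therefore $f_k\stackrel{\tau}\to f$.

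The only genuine obstacle is the non-injectivity of $r_A$ discussed above; everything else is bookkeeping. Note also that only the openness of $r_A$ for the single set $A$ appearing in (ii) is actually used, so the blanket hypothesis on all cofinite sets is more than enough here.
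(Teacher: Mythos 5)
Your proof is correct and takes essentially the same route as the paper's: both shrink the given $\tau$-neighbourhood of $f$ by intersecting with the $\tau_{pp}$-subbasic sets $v(p,f(p))$ and $w_1(p)$ for $p$ in the finite set $X\setminus A$ (legitimate since $\tau_{pp}\subseteq\tau$ by Theorem \ref{tppminima}), then use $\tau$-openness of $r_A$ together with (ii) to produce a preimage $g_k$ in that neighbourhood with $g_k\circ 1_A=f_k\circ 1_A$, and finally use (i) to force $g_k=f_k$. The only difference is your contradiction-and-subsequence wrapper, which the paper avoids by arguing directly with an arbitrary neighbourhood; this is purely cosmetic, and your side remark that openness of $r_A$ is needed only for the single cofinite set $A$ from (ii) is also accurate.
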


\begin{proof}
Let $A$ be as in (ii). Let $V\in\tau$ with $f\in V$. Since $\tau_{pp}\subseteq \tau$ (by Theorem \ref{tppminima}), we can assume that $V\subseteq v(x,f(x))\cap w_1(z)$ for all $x\in\dom(f)\cap (X\setminus A)$ and all $z\in (X\setminus A)\setminus \dom(f)$.  By hypothesis,  $V\circ 1_A$ is  open in $R_A$. By (ii), there is $k_0$ such that $f_k\circ 1_A\in V\circ 1_A$ for all $k\geq k_0$. Thus there is $g_k\in V$ such that $f_k\circ 1_A=g_k\circ 1_A$ for all $k\geq k_0$. By (i)  and theorems \ref{proyectiva} and \ref{conv}, there is $k_1\geq k_0$ such that $\dom(f_k)\cap (X\setminus A)= \dom(f)\cap (X\setminus A) $ and $f_k(x)=f(x)$ for all $x\in \dom(f)\cap (X\setminus A)$ and all $k\geq k_1$.  Since $g_k\in V$, $\dom(g_k)\cap (X\setminus A)= \dom(f)\cap (X\setminus A) $ and $g_k(x)=f(x)$ for all $x\in \dom(f) \cap (X\setminus A)$.  As $f_k\circ 1_A=g_k\circ 1_A$, then $f_k=g_k$ and hence $f_k\in V$ for all $k\geq k_1$.
\end{proof}

\begin{lema}
\label{unicidad1}
Let $\tau$ be a Polish inverse semigroup topology on $I(\N)$ such that $A\mapsto 1_A$ from $\cantor$ to $(I(\N),\tau)$ is continuous. Let $\N\subseteq Y$ with $Y\setminus\N$ infinite and $f_k\in S_\infty(Y)$, $k\in \N$ such that $f_k\to f$.  There is a dense $G_\delta$ set $G\subseteq S_\infty(Y)$ such that 
\[
\widehat{f_k}\circ \widehat{g}\stackrel{\tau}{\longrightarrow} \widehat{f}\circ \widehat{g}\;\; \mbox{for all $g\in G$}.
\]
\end{lema}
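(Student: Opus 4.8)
The plan is to prove that the set
\[
G=\{g\in S_\infty(Y):\ \widehat{f_k}\circ\widehat g\stackrel{\tau}{\longrightarrow}\widehat f\circ\widehat g\}
\]
is comeager; since it will turn out to be Borel, it then contains a dense $G_\delta$, which is what is required. The first point I would record is that the convergence is \emph{automatic in $\tau_{pp}$}: as $f_k\to f$ in $S_\infty(Y)$ we have $\widehat{f_k}=\pi(f_k)\to\pi(f)=\widehat f$ in $\tau_{pp}$ by continuity of $\pi$ (Proposition \ref{cociente2}), and right multiplication by $\widehat g$ is $\tau_{pp}$-continuous, so $\widehat{f_k}\circ\widehat g\to\widehat f\circ\widehat g$ in $\tau_{pp}$ for \emph{every} $g$ (one may also read this off pointwise from Theorems \ref{conv} and \ref{conv2}). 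Thus the entire content of the lemma is the \emph{upgrade from $\tau_{pp}$ to $\tau$} on a large set of $g$.

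Write $h_k^g=\widehat{f_k}\circ\widehat g$ and $h^g=\widehat f\circ\widehat g$. The key preliminary observation is that, for \emph{every} $g$, $h_k^g$ and $h^g$ differ by a \emph{finite} partial bijection whose support escapes to infinity. Indeed they disagree at $x\in\N$ only when $g(x)\in\N$ and $f_k(g(x))\neq f(g(x))$, i.e. for $x$ in the finite set $g^{-1}(E_k)$ where $E_k=\{y\in\N:\ f_k(y)\neq f(y)\}$; and since $f_k\to f$ and $f_k^{-1}\to f^{-1}$ pointwise, each fixed element of $\N$ lies in the domain (resp.\ image) of this correction for only finitely many $k$. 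Hence the cofinite sets $A_k=\N\setminus g^{-1}(E_k)$ satisfy $A_k\to\N$ in $\cantor$ and $h_k^g\circ 1_{A_k}=h^g\circ 1_{A_k}$. Using the \emph{hypothesis} that $A\mapsto 1_A$ is $\tau$-continuous together with joint continuity of multiplication in $\tau$, I would then obtain, for every $g$,
\[
h_k^g\circ 1_{A_k}=h^g\circ 1_{A_k}\stackrel{\tau}{\longrightarrow} h^g\circ 1_{\N}=h^g,
\]
and, after inverting, the symmetric statement on the image side. So the \emph{restricted} products already converge in $\tau$; what remains is to absorb the finite correction.

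For the category argument I would first note that $G$ is Borel. Since $\tau_{pp}\subseteq\tau$ are both Polish, the identity $(I(\N),\tau)\to(I(\N),\tau_{pp})$ is a continuous bijection of Polish spaces, hence a Borel isomorphism by Lusin--Souslin, so $\tau$ and $\tau_{pp}$ have the same Borel sets. As $g\mapsto\widehat g$ is $\tau_{pp}$-continuous (Proposition \ref{cociente2}) it is Borel into this common Borel structure, so $g\mapsto h_k^g$ is $\tau$-Borel and $g\mapsto d(h_k^g,h^g)$ is a Borel real function, where $d$ is a complete compatible metric for $\tau$. Writing $G=\bigcap_m\bigcup_K\bigcap_{k\ge K}\{g:\ d(h_k^g,h^g)\le 1/m\}$ exhibits $G$ as Borel; it therefore has the Baire property, and it suffices to prove $G$ is comeager. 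Here I would lean on \emph{completeness} of $\tau$: it is enough to show that $(h_k^g)_k$ is $d$-Cauchy for comeager $g$, since then it $\tau$-converges, and its limit must be $h^g$ because $\tau\supseteq\tau_{pp}$ is Hausdorff and $h_k^g\to h^g$ already in $\tau_{pp}$. The $d$-Cauchy condition is again a countable Boolean combination of Borel sets in $g$, to be secured generically by diagonalising against a countable base of $\tau$ at the (continuously varying) limit, using genericity of $g$ to drive the escaping supports of the finite corrections out quickly enough to be invisible to each fixed basic $\tau$-neighbourhood.

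The main obstacle is exactly this last point: controlling the finite corrections in the a priori \emph{strictly finer} topology $\tau$. One cannot simply split $h_k^g=(h_k^g\circ 1_{A_k})\sqcup c_k$ and treat the pieces separately, because the join of orthogonal elements of $I(\N)$ is not a $\tau$-continuous operation; the restricted products converge for \emph{every} $g$, yet this alone never reconstitutes $h_k^g$. The argument must therefore establish $\tau$-convergence of the \emph{unrestricted} $h_k^g$ directly, and the role of genericity of $g$ (available because $S_\infty(Y)$ is a Polish group and the corrections depend only on the fixed data $f,(f_k)$ transported through $g^{-1}$) is precisely to force these residual finite modifications to vanish in $\tau$ while keeping the whole sequence $d$-Cauchy.
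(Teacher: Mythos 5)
Your preliminary reductions are sound as far as they go --- the automatic $\tau_{pp}$-convergence for every $g$, the coincidence of the Borel structures of $\tau$ and $\tau_{pp}$, the Borelness of $G$, the sufficiency of comeagerness, and the observation that the restricted products $h_k^g\circ 1_{A_k}$ already $\tau$-converge for every $g$ --- but the proof has a genuine gap at exactly the point you yourself flag as ``the main obstacle'': you never show that the unrestricted sequence $(h_k^g)_k$ is $d$-Cauchy (equivalently, $\tau$-convergent) for comeager $g$. ``Diagonalising against a countable base of $\tau$, using genericity of $g$ to drive the escaping supports out'' is not an argument: since nothing whatsoever is known about what basic $\tau$-neighbourhoods look like, there is no mechanism by which genericity of $g$ in the group $S_\infty(Y)$ interacts with the metric $d$. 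There is also a local error: pointwise convergence $f_k\to f$ in $S_\infty(Y)$ does not make the disagreement sets $E_k=\{y\in\N:\ f_k(y)\neq f(y)\}$ finite, so the corrections are not finite partial bijections and the sets $A_k=\N\setminus g^{-1}(E_k)$ need not be cofinite; what survives is only $A_k\to\N$ in $\cantor$, which fortunately is all that your restricted-product step uses.

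The missing idea --- the one that makes genericity do actual work in the paper's proof --- is the classical theorem that a Borel map from a Polish space to a Polish (indeed second countable) space is continuous when restricted to some dense $G_\delta$ set (\cite[Theorem 8.38]{KECH}). Since $\pi:S_\infty(Y)\to(I(\N),\tau_{pp})$ is continuous and $\tau$, $\tau_{pp}$ share the same Borel sets, the map $\pi:S_\infty(Y)\to(I(\N),\tau)$ is Borel, hence $\tau$-continuous on a dense $G_\delta$ set $H\subseteq S_\infty(Y)$. The group structure of $S_\infty(Y)$ then produces the required set: each $L_{k+1}=\{g:\ f_k\circ g\in H\}$ and $L_0=\{g:\ f\circ g\in H\}$ is the preimage of $H$ under the homeomorphism $g\mapsto f_k\circ g$ (resp.\ $g\mapsto f\circ g$), hence dense $G_\delta$, and $G=\bigcap_k L_k$ works. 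For $g\in G$ one has $f_k\circ g\to f\circ g$ with all terms inside $H$, so $\pi(f_k\circ g)\stackrel{\tau}{\longrightarrow}\pi(f\circ g)$; finally the identity $\pi(f_k)\circ\pi(g)=\pi(f_k\circ g)\circ 1_{\dom(\pi(f_k)\circ\pi(g))}$, together with the $\tau_{pp}$-continuity of $\dom$ and the hypothesis that $A\mapsto 1_A$ is $\tau$-continuous (which you do invoke, but only for the restricted products), converts this into $\widehat{f_k}\circ\widehat g\stackrel{\tau}{\longrightarrow}\widehat f\circ\widehat g$. This conversion is the rigorous counterpart of your ``absorb the finite correction'' step: it is carried out through the idempotent identity above, not through any control of supports, and it is the step your proposal leaves unproved.
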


\begin{proof}
By Theorem \ref{tppminima} we have that $\tau_{pp}\subseteq \tau$. Since both topologies are Polish, by a well know classical result,  they have the same Borel sets (see \cite[Exercise 15.4]{KECH}). As $\pi$ is continuous with respect to $\tau_{pp}$,  we have that $\pi: S_\infty(Y)\to (I(\N), \tau)$ is Borel measurable.  Thus,  there is a dense $G_\delta$ set $H\subseteq S_\infty(Y)$ such that $\pi|_H: H\to (I(\N), \tau)$ is continuous (see \cite[Theorem 8.38]{KECH}). Let 
\[
L_{k+1}=\{g\in S_\infty(Y):\;  f_k\circ g\in H\},\;\;\; L_0=\{g\in S_\infty(Y):\; f\circ g\in H\}.
\]
Let $G=\bigcap_k L_k$. Since each $L_k$ is  dense $G_\delta$, so is $G$.   As $f_k\circ g\to f\circ g$ (in $S_\infty(Y)$) for all $g\in G$ and $\pi$ is continuous in $H$,  $\pi(f_k\circ g)\stackrel{\tau}{\longrightarrow} \pi(f\circ g)$ for all $g\in G$. 

As we said before, $\pi$ is not a homomorphism, however, we have the following 
\[
\pi(f_k)\circ \pi(g)= \pi(f_k\circ g)\circ 1_{\dom(\pi(f_k)\circ \pi(g))}.
\]
Notice that $\pi(f_k)\stackrel{\tau_{pp}}{\longrightarrow}\pi(f)$ as $\pi$ is continuous with respect to $\tau_{pp}$ (by Proposition \ref{cociente2}). As the function $\dom$ is continuous (by Theorem \ref{proyectiva}), we have 
\[
\dom(\pi(f_k)\circ \pi(g))\to\dom(\pi(f)\circ\pi(g)) 
\]
where the convergence is in the Cantor space $\cantor$.  Finally, by hypothesis  the map $A\mapsto 1_A$ is continuous with respect to  $\tau$, thus  we conclude 
\[
\pi(f_k) \circ\pi(g)\stackrel{\tau}{\longrightarrow} \pi(f)\circ \pi(g)
\]
for all $g\in G$.
\end{proof}

\begin{thm}
\label{unicidad2}
$\tau_{pp}$ is the unique inverse semigroup Polish topology on $I(\N)$ such that the collection of idempotent is compact and $r_A$ is a $\tau$-open map for every $A\subseteq \N$.
\end{thm}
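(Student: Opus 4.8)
The plan is to prove $\tau=\tau_{pp}$. Since every inverse semigroup Hausdorff topology on $I(\N)$ contains $\tau_{pp}$ (Theorem \ref{tppminima}), we already have $\tau_{pp}\subseteq\tau$, so it suffices to show $\tau\subseteq\tau_{pp}$; equivalently, that $\mathrm{id}:(I(\N),\tau_{pp})\to(I(\N),\tau)$ is continuous. As both topologies are Polish, hence first countable, this reduces to a statement about sequences: I would fix $f_k\stackrel{\tau_{pp}}\to f$ and aim to prove $f_k\stackrel{\tau}\to f$. Thus the whole argument is a convergence-transfer argument whose purpose is to feed the two auxiliary lemmas.

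First I would push the problem into a symmetric group, where homogeneity is available. Fix $Y\supseteq\N$ with $Y\setminus\N$ countably infinite, so that $\pi:S_\infty(Y)\to(I(\N),\tau_{pp})$ is a continuous open surjection (Proposition \ref{cociente2}). Since an open continuous surjection lifts convergent sequences, from $f_k\stackrel{\tau_{pp}}\to f$ I would obtain $F_k\to F$ in $S_\infty(Y)$ with $\widehat{F_k}=f_k$ and $\widehat F=f$. The hypothesis that the idempotents are compact gives, through Theorem \ref{compact-idempotent}, that $A\mapsto 1_A$ is $\tau$-continuous, which is exactly the standing assumption of Lemma \ref{unicidad1}. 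Applying that lemma to $F_k\to F$ produces a dense $G_\delta$ set $G\subseteq S_\infty(Y)$ with
\[
f_k\circ\widehat g\stackrel{\tau}\to f\circ\widehat g\qquad\text{for every }g\in G.
\]
Now I would cancel $\widehat g$ on the right: since $\tau$ is a topological semigroup topology, right multiplication by the fixed element $(\widehat g)^{-1}$ is $\tau$-continuous and $\widehat g\circ(\widehat g)^{-1}=1_{\im\widehat g}$, whence for each $g\in G$,
\[
f_k\circ 1_{\im\widehat g}\stackrel{\tau}\to f\circ 1_{\im\widehat g}.
\]

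The goal is then to invoke Lemma \ref{casi-convergencia}: because $r_A$ is $\tau$-open for every $A$, it is enough to produce a single $g\in G$ for which $A:=\im\widehat g$ is cofinite, for then hypothesis (ii) of that lemma holds and, together with $f_k\stackrel{\tau_{pp}}\to f$, yields $f_k\stackrel{\tau}\to f$. The cleanest instance is to find $g$ in the partition-preserving subgroup $P=\{g\in S_\infty(Y):g[\N]=\N\}$: for such $g$ one has $\widehat g\in S_\infty(\N)$, so $\im\widehat g=\N$ and the cancellation already gives $f_k\stackrel{\tau}\to f$ directly, without even needing Lemma \ref{casi-convergencia}.

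The main obstacle is precisely this matching step. The set $G$ is comeager in $S_\infty(Y)$, whereas the helpful elements — those with $\im\widehat g$ cofinite, and in particular those in $P$ — form a meager set (cofiniteness of $\im\widehat g$ imposes infinitely many independent coordinate conditions on $g^{-1}$, and $P$ is closed nowhere dense), so a comeager $G$ need not meet them. I see two ways to try to break this deadlock, and I expect the correct proof to implement one. The first is to gain control over $G$ through the choice of lift: since $G=\bigcap_kL_k$ with $L_{k+1}=\{g:F_k\circ g\in H\}$ and $L_0=\{g:F\circ g\in H\}$ for the continuity set $H$ of $\pi$, one has freedom inside the fibers of $\pi$ (which does not disturb $\widehat{F_k}=f_k$), and I would try, by a relative Kuratowski--Ulam argument along the coset structure of $P$, to force $G\cap P\neq\emptyset$. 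The second route avoids a single cofinite set: for each $m$ one can find $g_m\in G$ with $\{1,\dots,m\}\subseteq\im\widehat g_m$ (this is an intersection of clopen sets, hence meets the comeager $G$), giving $f_k\circ 1_{B_m}\stackrel{\tau}\to f\circ 1_{B_m}$ with $B_m=\im\widehat g_m$ satisfying $1_{B_m}\stackrel{\tau}\to 1_{\N}$ (by $\tau$-continuity of $A\mapsto 1_A$), so that $f\circ 1_{B_m}\stackrel{\tau}\to f$, followed by a diagonal (Moore--Osgood) passage to the limit. The delicate point in this second route — and what makes the \emph{cofiniteness} in Lemma \ref{casi-convergencia} unavoidable — is that $f_k$ and $f_k\circ 1_{B_m}$ differ on the \emph{infinite} set $\dom f_k\setminus B_m$, so the interchange of limits requires a uniformity in $k$ that must be extracted from $\tau_{pp}$-convergence. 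Securing that uniformity, or equivalently producing one genuinely cofinite $A=\im\widehat g$ with $g\in G$, is the crux of the proof.
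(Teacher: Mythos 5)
Your proposal reproduces, essentially step for step, the strategy of the paper's own proof: reduce to a sequential-continuity statement, lift through the continuous open surjection $\pi:S_\infty(Y)\to (I(\N),\tau_{pp})$ of Proposition \ref{cociente2}, convert compactness of the idempotents into $\tau$-continuity of $A\mapsto 1_A$ via Theorem \ref{compact-idempotent}, apply Lemma \ref{unicidad1} to get a comeager set $G$, cancel $\widehat{g}$ on the right to obtain $f_k\circ 1_{\im(\widehat{g})}\stackrel{\tau}{\longrightarrow} f\circ 1_{\im(\widehat{g})}$, and finish with Lemma \ref{casi-convergencia}. (The paper phrases the reduction differently --- it shows $\pi:S_\infty(Y)\to(I(\N),\tau)$ is sequentially continuous and then invokes the theorem that $\tau_{pp}$ is the unique inverse semigroup Hausdorff topology making $\pi$ continuous --- but this is equivalent to your direct lifting of a sequence $f_k\stackrel{\tau_{pp}}{\longrightarrow}f$, which is itself correct.) The single step you declare yourself unable to complete, namely producing $g\in G$ with $\im(\widehat{g})$ cofinite, is exactly the step at which the paper concludes: it asserts that $\{h\in S_\infty(Y):\ \im(\widehat{h})\ \text{is cofinite}\}$ is a dense $G_\delta$, intersects it with $G$, and is done.

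So your write-up does have a genuine gap --- you do not have a proof, and neither of your repair routes is carried out (Route 2 stalls, as you say, on an unestablished uniformity needed to interchange the limits in $k$ and $m$; Route 1 is only a plan) --- but your diagnosis of the obstruction is correct, and it in fact exposes an error in the paper's argument. The set $\{h:\ \im(\widehat{h})\ \text{cofinite}\}$ is meager, not comeager: since $h$ is a bijection of $Y$, one has $\N\setminus\im(\widehat{h})=\N\cap h[Y\setminus\N]$, and for each $k$ the set of $h$ sending at least $k$ points of $Y\setminus\N$ into $\N$ is open and dense (any finite partial injection extends by mapping fresh points of $Y\setminus\N$ to fresh points of $\N$), so for comeager many $h$ the set $\N\setminus\im(\widehat{h})$ is infinite. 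Hence a comeager $G$ need not meet the set the paper needs, and the published proof of Theorem \ref{unicidad2} is incomplete at precisely the point you flagged; the theorem itself remains true, since it follows from the stronger automatic-continuity result of Elliott et al.\ cited in the introduction, which needs neither extra hypothesis. To salvage this line of argument one would have to either carry out a relative-category argument of the kind you sketch (working inside the nowhere dense cosets $F_k\circ P$, where $H$ dense $G_\delta$ in $S_\infty(Y)$ gives no information), or weaken the cofiniteness requirement in Lemma \ref{casi-convergencia}; neither is routine, and as it stands both your proposal and the paper's proof are missing this step.
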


\begin{proof}
By Theorem \ref{oper-abierta}, $r_A$ is a $\tau_{pp}$-open map and, by  Theorem \ref{proyectiva}, the collection of idempotent is $\tau_{pp}$-compact.
Conversely, let $\tau$ be a topology on $I(\N)$ as in the hypothesis. 
Let $\N\subseteq Y$ be such that $Y\setminus \N$ is countable. To have that $\tau=\tau_{pp}$,  it suffices to  show, by Lemma \ref{unicidad1},  that $\pi:S_\infty(Y)\to (I(\N),\tau)$ is continuous. 

Let $f_k\in S_\infty(Y)$ be a sequence converging to $f$. Let $G\subseteq S_\infty(Y)$ be a dense $G_\delta$ as in  Proposition \ref{unicidad1}. Since $\{h\in S_\infty(Y):\; \im(\widehat{h})\; \text{is cofinite}\}$ is dense $G_\delta$, let $g\in G$ be such that  $\im(\widehat{g})$ is cofinite.   Now notice that $\widehat{g}\circ \widehat{g}^{-1}= 1_{\im(\widehat{g})}$.
Let $A=\im(\widehat{g})$. Since $\widehat{f_k}\circ \widehat{g}\stackrel{\tau}{\longrightarrow} \widehat{f}\circ \widehat{g}$, by the continuity of $\circ$  we have 
$\widehat{f_k}\circ 1_A\stackrel{\tau}{\longrightarrow} \widehat{f}\circ 1_A$. Thus by Lemma \ref{casi-convergencia}, we conclude that $\widehat{f}_k\stackrel{\tau}{\longrightarrow} \widehat{f}$. 
\end{proof}

\bibliographystyle{plain}

\end{document}